\numberwithin{equation}{section}
\def\BState{\State\hskip-\ALG@thistlm}
\newcommand{\qi}{\mathbf i}
\newcommand{\qj}{\mathbf j}
\newcommand{\qk}{\mathbf k}
\newtheorem{theorem}{Theorem}[section]
\newtheorem{remark}[theorem]{Remark}
\newtheorem{example}[theorem]{Example}
\newtheorem{corollary}[theorem]{Corollary}
\newtheorem{proposition}[theorem]{Proposition}
\newtheorem{definition}[theorem]{Definition}
\def\V{{\rm{Vec}}}
\def\re{{\rm{Re}}}
\def\dt{{\rm{det}}}
\newcommand{\dv}{{\rm{dv}}}
\def\R{\mathbb{R}}
\def\C{\mathbb{C}}
\def\H{{\mathbb{H}}}
\def\HCO{{\mathbb{H}}_{\rm{coq}}}
\def\ll{\llbracket}
\def\rr{\rrbracket}
\def\CP{{\rm \Psi}}
\begin{document}

\title{The number of zeros of unilateral polynomials
	over coquaternions revisited}%

\author{M.~Irene Falc\~{a}o\\
CMAT and DMA\\ University of Minho, Portugal\\
mif@math.uminho.pt\\
\and Fernando Miranda\\
CMAT and DMA\\ University of Minho, Portugal\\
fmiranda@math.uminho.pt\\
\and Ricardo Severino\\
DMA\\ University of Minho, Portugal\\
ricardo@math.uminho.pt\\
\and M. Joana Soares\\
NIPE and DMA\\ University of Minho, Portugal\\
jsoares@math.uminho.pt
}

\maketitle

\begin{abstract}
The literature on quaternionic polynomials and, in particular, on methods for determining and classifying their zero-sets,
is fast developing and reveals a growing interest on this subject. In contrast, polynomials defined over the algebra of coquaternions have received very little attention from researchers.
One of the few exceptions is the very recent paper by Janovsk\'a and Opfer [{\em Electronic Transactions on Numerical Analysis}, Volume 46, pp. 55-70, 2017], where, among other results, we can find a first attempt to  prove 
that a unilateral coquaternionic polynomial of degree $n$   
has, at most,  $n(2n-1)$ zeros.  

In this paper we present a full proof 
of the referred result, using a totally different and, from our point of view, much simpler approach.
 Also, we give a complete characterization of the zero-sets of such polynomials and present a new result giving conditions which guarantee the existence of a special type  of zeros.
An algorithm to compute and classify all the zeros of a coquaternionic polynomial  is proposed and several numerical examples are carefully constructed.
\end{abstract}

\noindent {\slshape Keywords:}
Coquaternions $\cdot$ coquaternionic polynomials $\cdot$ companion polynomial $\cdot$ admissible classes

\section{Introduction}
In 1941, I. Niven, in his pioneering work \cite{Niven1941}, proved that any unilateral polynomial defined over the real algebra $\H$ of quaternions 
always has a zero in $\H$, describing, simultaneously,  
a process to compute the roots of 
 any such polynomial.
The procedure proposed by Niven has two distinct parts: the first is a process to determine which similarity classes of $\H$ contain roots 
of the polynomial and the second is a computational procedure  for determining the roots lying in each class.
 It happens that the first part of Niven's scheme
is not very practical and the problem of the determination of  the roots of a quaternionic polynomial
remained dormant for quite a while. In fact, it was only in the year 2001 that Ser\^odio, Pereira and  Vit\'oria \cite{SerodioPV2001}
proposed an efficient procedure to replace the first part of Niven's method, presenting what can be considered as the first really usable algorithm for determining the zeros of quaternionic polynomials.
After this first paper, the interest in the development of root-finding methods for quaternionic polynomials has called  the attention of many researchers; see e.g.,
\cite{DeLeoDucatiLeonardi2006,Falcao2014,MirandaFalcao2014b,PogoruiShapiro2004,SerodioSiu2001}. Most of the methods available to compute the roots of a given polynomial $P$ make use  of the so-called  {\em companion polynomial} of $P$  to replace the first part of Niven's procedure, i.e. to determine which are the  classes containing the roots, differing then in the way how the roots are computed once the classes are found.
This is precisely the case of the method introduced by Ser\^odio and Siu \cite{SerodioSiu2001} and of the closely related method later proposed in \cite{JanOpf2010}.

In contrast to the case of quaternionic polynomials, the  literature on polynomials defined over the algebra  $\HCO$ of coquaternions is very scarce: see, nevertheless,  \cite{ErdogduOzdemir2015,JanOpf2013,JanOpf2014,Ozdemir2009,PogoruiRDagnino2010} and the very recent publication by Janovk\'a and Opfer \cite{JanOpf2017}.
In this last reference, the authors aim to extend to coquaternionic polynomials their method  for polynomials over $\H$   given in \cite{JanOpf2010}.  Naturally, due to the nature of the coquaternionic algebra, in particular the fact that this is not a division algebra, significant differences occur. 
In \cite{JanOpf2017} the authors 
also state an important result relating the degree of a coquaternionic polynomial with the maximum number of zeros that the polynomial  may have.\footnote{We should observe that the results contained in \cite{JanOpf2017} are obtained for polynomials defined not only over the algebra of coquaternions, but also over two other algebras: the algebra of nectarines $\H_{\rm nec}$ and the algebra of 
coquaternionic case.} 

The main purpose of this paper is to present a  complete and  simpler proof of the referred result on the maximum number of zeros of a coquaternionic polynomial and simultaneously to describe the zero-structure of such polynomials.
A new result giving conditions which guarantee the existence of a special type of zeros ---  which we call {\em linear zeros} --- is also presented and a positive answer to a question posed in \cite{JanOpf2017} is given.

The rest of the paper is organized as follows: Section~2 contains a revision of the main definitions and results on the algebra of coquaternions. Section~3 is dedicated to  unilateral coquaternionic polynomials and contains the main results of the paper; in particular,  a revised version of the root-finding algorithm proposed 
in \cite{JanOpf2017} is given.
Finally, Section~4 contains carefully chosen examples illustrating some of the conclusions contained in Section~3.

\section{Some basic results  on coquaternions}

Let $\{1, \qi, \qj,\qk\}$ be an orthonormal basis of the Euclidean vector space $\R^{4}$ with a product given
according to the multiplication rules
$$
  \qi^2=-1, \ \qj^2=\qk^2=1,\;\;\qi\qj = - \qj\qi = \qk.
$$
This non-commutative product generates the  algebra of real coquaternions, also known as split quaternions, which we will denote by  $\HCO$.
We will embed the
space $\R^4$ in $\HCO$ by identifying the element
$q=(q_0,q_1,q_2,q_3)\in \R^4$ with the element
$q = q_0 + q_1\qi + q_2\qj + q_3\qk \in \HCO$. Thus, throughout the paper, we will not distinguish an element in $\R^4$ (sometimes written as a column vector, if convenient)
from the corresponding coquaternion, unless we need to stress the context. 

The explicit multiplication of two coquaternions $p = p_0+p_1\qi+p_2\qj+p_3\qk$ and  $q = q_0+q_1\qi+q_2\qj+q_3\qk$ is given by
\begin{multline}\label{product}
 pq=  p_0q_0 - p_1 q_1 + p_2 q_2 + p_3 q_3 +(p_0 q_1 + p_1 q_0 - p_2q_3 + p_3q_2)\qi \\
 +  (p_0q_2 - p_1q_3 + p_2q_0 + p_3q_1)\qj
+ (p_0q_3 + p_1q_2 - p_2q_1 + p_3q_0)\qk. 
\end{multline}
The expression (\ref{product}) shows that the product $pq$ can be computed using matrices as 
$M_p q$, where 
\begin{equation}\label{matrixLq}
M_p=\left(
\begin{array}{cccc}
p_0 &-p_1& p_2& p_3\\
p_1 &p_0 &p_3& -p_2\\
p_2 &p_3 &p_0&-p_1\\
p_3 &-p_2 &p_1 &p_0
\end{array}\right)
\end{equation}
and where, naturally, when performing the matrix multiplication, we identify  $q$ with the column vector $(q_0, q_1, q_2,q_3)^{\rm T}.$

Given a coquaternion $q= q_0+q_1\qi+q_2\qj+q_3\qk \in \HCO$,
its {\em conjugate} $\overline{q}$ is defined as
$\overline{q}=q_0-q_1\qi-q_2\qj-q_3\qk$;
the number $q_0$ is called the {\em real part} of $q$ and denoted by $\re (q)$ and
the {\em vector part}   of $q$, denoted by $\V (q)$, is given by $\V( q)= q_1\qi+q_2\qj+q_3\qk$.

We will identify the set of coquaternions whose vector part is zero with the set $\R$ of real numbers.
We will also consider three particularly  important subspaces of dimension two of $\HCO$, usually called the {\em canonical planes} or {\em cycle planes}.  The first is $\{q \in \HCO:q=a+b\, \qi, a, b \in \R\} $ which, naturally, we  identify with the complex plane $\C$; the second, which we denote by ${\mathbb P}$ and whose elements are usually called
{\em perplex numbers} is
given  by ${\mathbb{P}}=\{ q \in \HCO:q=a+b\, \qj, a, b \in \R\}$ and  corresponds to the classical {\em Minkowski plane};
the third, denoted by ${\mathbb{D}}$, is the subspace of the so-called {\em dual numbers}, ${\mathbb D}=\{ q \in \HCO:q=a+b\, (\qi+\qj), a, b \in \R\}$ and
can be identified with the classical {\em Laguerre plane}. 
We call {\textit{determinant}} of $q$ and denote by  $\dt(q)$ the quantity
given by
\begin{equation} \label{detq}
\dt(q) =q \,\overline{q}=q_0^2+q_1^2-q_2^2-q_3^2.
\end{equation}

\begin{remark}{\rm
Other authors use different notations and denominations for the value given by (\ref{detq}); see e.g. \cite{Antonuccio2015,ErdogduOzdemir2013,JanOpf2014,JanOpf2017}.  We propose the use of the term determinant, since it is known that
every coquaternion $q=q_0+q_1 \qi+q_2\qj+q_3 q_k$ can be represented  by the real matrix 
$\begin{pmatrix}
q_0+q_3&q_1+q_2\\
q_2-q_1&q_0-q_3
\end{pmatrix}
$
whose determinant is precisely the value given by (\ref{detq}).}

\end{remark}

%----------------------------------------------
Contrary to what happens in the case of quaternions, not all non-zero coquaternions are invertible. It can be shown that 
a coquaternion $q$ is invertible  if and only if $\dt( q) \ne 0$. In that case, we have
$q^{-1} = \frac{\overline{q}}{\dt(q) }.$
A non-invertible element $q \in \HCO$ is also called {\textit{singular}}. It can also be shown that
a coquaternion $q$ is  singular if and only if it is a {\textit{zero divisor}}, i.e.
there exist $r,s \in \HCO, r, s \ne 0$ such that $rq=qs=0$. 
%----------------------------------------------

We now recall the concept of similarity in the set of coquaternions.
\begin{definition}
We say that a  coquaternion  $q$ is similar to a coquaternion $p$, and write  
$q\sim p$, if there exists an invertible coquaternion $h$ such that $q=h^{-1} p h.$
\end{definition}
This is an equivalence relation, partitioning $\HCO$ in the so-called {\em similarity-classes}, defined, for each $q \in \HCO$ by
$[q]=\{p\in \HCO: p\sim q\}$.
It can easily be shown that 
$[q]=\{q\}$ if and only if $q \in \R.$

It is a well-known result that any Hamilton quaternion is similar to a complex number with non-negative real part --- see e.g. \cite[Lemma 3]{Brenner1951} ---  thus allowing  the choice of that special form for the representative of any similarity class.  The following result shows that the situation is different in the case of coquaternions, where three different types of representatives need to be used (for non-real coquaternions).

In what follows, given a coquaternion  $q=q_0+q_1 \qi+q_2 \qj+q_3 \qk=q_0+\V( q)$, we will  use 
$\dv(q)$ to denote the determinant of the vector part of $q$, i.e. $\dv(q):=\dt(\V (q))$.

\begin{theorem}(\cite{falcaoICCSA2017},\cite{KulaYayli2007}) \label{thmJordanForms}
Let $q=q_0+q_1 \qi+q_2 \qj+q_3 \qk$ be a non-real coquaternion. 
Then:
\begin{enumerate}
\item[\rm(i)] If $\dv(q)>0$, $q$ is similar to the complex number $q_0+\sqrt{\dv(q)}\, \qi$, i.e. $[q]=[q_0+\sqrt{\dv(q)}\, \qi]$;
\item[\rm(ii)]  if $\dv(q)<0$,  $q$ is similar to the perplex number $q_0+\sqrt{-\dv(q)}\, \qj$, i.e.  $[q]=[q_0+\sqrt{-\dv(q)}\, \qj]$;
\item[\rm(iii)] if $\dv(q)=0$,  $q$ is similar to the dual number $q_0+\qi+\qj$, i.e. $[q]=[q_0+\qi+\qj] $.
\end{enumerate}
\end{theorem}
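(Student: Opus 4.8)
The plan is to reduce the problem to the vector part of $q$ and then to carry this vector part to the required canonical form by explicit conjugations. Since the real part is central in $\HCO$, for any invertible $h$ one has $h^{-1}qh = q_0 + h^{-1}\V(q)h$, so conjugating $q$ amounts to conjugating its vector part $v:=\V(q)$, and it suffices to produce an invertible $h$ carrying $v$ to the vector part of the proposed representative. Before doing so I would record two invariance facts. Because the determinant is multiplicative, $\dt(h^{-1}vh)=\dt(v)$, so conjugation preserves $\dv$ exactly. Moreover any conjugate of a non-real coquaternion satisfies the same quadratic equation $x^2 - 2\re(q)\,x + \dt(q)=0$ as $q$ itself, from which one deduces that $\re$ is a similarity invariant and, in particular, that $h^{-1}vh$ is again a pure vector whenever $v$ is. Consequently the three cases are mutually exclusive and invariant, and a short computation shows that $q_0+\sqrt{\dv(q)}\,\qi$, $q_0+\sqrt{-\dv(q)}\,\qj$ and $q_0+\qi+\qj$ all share the real part $q_0$ and the value of $\dv$ with $q$; this is the bookkeeping that makes the three normal forms plausible.

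Next I would build the conjugations from two elementary families, each of determinant $1$ and each acting on the space of pure vectors as a linear map preserving $\dv(v)=q_1^2-q_2^2-q_3^2$. Conjugation by $\cos\theta+\sin\theta\,\qi$ fixes $\qi$ and rotates $(\qj,\qk)$ through the Euclidean angle $2\theta$, while conjugation by $\cosh\phi+\sinh\phi\,\qk$ fixes $\qk$ and acts on $(\qi,\qj)$ as a hyperbolic rotation. Using the first family I eliminate the $\qk$-component of $v$, bringing it to $a\,\qi+b\,\qj$ with $a=q_1$ and $b=\sqrt{q_2^2+q_3^2}\ge 0$, so that $a^2-b^2=\dv(q)$, and then I apply a boost from the second family to the planar vector $a\,\qi+b\,\qj$.

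The outcome now depends on the sign of $\dv(q)=a^2-b^2$, which is exactly the trichotomy of the statement. If $\dv(q)>0$ then $|b|<|a|$, so a boost with $\tanh 2\phi = b/a$ annihilates the $\qj$-component and leaves $\pm\sqrt{\dv(q)}\,\qi$; symmetrically, if $\dv(q)<0$ a boost annihilates the $\qi$-component and leaves $\pm\sqrt{-\dv(q)}\,\qj$. The remaining sign ambiguity is removed by discrete conjugations: conjugation by $\qj$ sends $\qi\mapsto-\qi$ and conjugation by $\qi$ sends $\qj\mapsto-\qj$, so both signs lie in the same class. Adding back the central real part $q_0$ then settles cases (i) and (ii).

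I expect the genuinely delicate case to be (iii), $\dv(q)=0$, where the non-division-algebra behaviour shows up. Here $a^2=b^2$ with $a=q_1\ne 0$ (otherwise $v=0$ and $q$ would be real), so after the first reduction $v$ is a nonzero null vector of the form $q_1(\pm\qi\pm\qj)$. Unlike the preceding cases, a boost does not preserve the length of a null vector — it acts on the direction $\qi+\qj$ by a strictly positive scaling $e^{-2\phi}$ — so I can rescale any positive multiple of $\qi+\qj$ to $\qi+\qj$ itself, while the sign and orientation alternatives $\qi-\qj$, $-\qi+\qj$, $-(\qi+\qj)$ are matched to $\qi+\qj$ by conjugation with $\qi$, $\qj$ and $\qk$. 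The main obstacle is precisely to notice that in the null case the magnitude carries no information and that the boosts supply exactly the scaling freedom needed to reach the single representative $q_0+\qi+\qj$; once this is observed, restoring the real part $q_0$ in each case yields the three stated identities of similarity classes.
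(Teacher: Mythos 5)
Your argument is correct, but it follows a genuinely different route from the paper. The paper does not prove Theorem~\ref{thmJordanForms} internally: it cites \cite{KulaYayli2007} for (i)--(ii) and \cite{falcaoICCSA2017} for (iii), and simply records, for each case, a single explicit conjugator $h$ (e.g.\ $h=(q_1+\sqrt{\dv(q)})-q_3\qj+q_2\qk$ in case (i)) whose correctness is checked by direct multiplication. You instead assemble the conjugator as a product of elementary similarities: a Euclidean rotation $\cos\theta+\sin\theta\,\qi$ acting on the $(\qj,\qk)$-plane to kill the $\qk$-component, a hyperbolic boost $\cosh\phi+\sinh\phi\,\qk$ acting on the $(\qi,\qj)$-plane, and discrete flips by $\qi,\qj,\qk$ to fix signs. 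I verified the two one-parameter actions you claim (rotation by $2\theta$, boost by $2\phi$, both with $\dt(h)=1$, hence invertible), the invariance of $\re$ and $\dv$ under conjugation, and the three case analyses, including the key point in case (iii) that a boost rescales a null vector $\lambda(\qi+\qj)$ by $e^{-2\phi}$ so that its magnitude can be normalised to $1$; all steps hold, and the non-vanishing of $q_1$ in the null case is correctly derived from $q\notin\R$. What your approach buys is structural transparency: it exhibits the trichotomy as the orbit classification of the quadratic form $\dv$ on the $3$-dimensional space of pure vectors under a group of rotations and boosts, and it explains \emph{why} the Type~3 class admits a single representative independent of $|q_1|$. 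What the paper's approach buys is an immediately computable closed-form $h$, which is what its root-finding algorithm actually needs. Either proof is complete; yours could be tightened only by stating explicitly that the composite of all the conjugators used is itself invertible (immediate, since each factor has nonzero determinant and $\dt$ is multiplicative).
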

\vskip .3cm
The proof contained in \cite{KulaYayli2007} for the cases (i)-(ii) and in \cite{falcaoICCSA2017} for the case (iii)
gives explicit expressions on how we may choose $h \in \HCO$ such that $h^{-1} q h$ has the specified form. We simply recall here those expressions.
\begin{enumerate}
\item[(i)]
Take $h=(q_1+\sqrt{\dv(q)})-q_3\qj+q_2 \qk $, if $q_2^2+q_3^2\ne 0$ and $h=\qj$, if $q=q_0+q_1 \qi$ with $q_1<0$.  
\item[(ii)] If $q_1^2+q_3^2 \ne 0$, take  $h=q_1-q_3\qj+(q_2 -\sqrt{-\dv(q)})\qk $, if $q_2 \le 0$  and 
 $h=(q_2+\sqrt{-\dv(q)})+q_3 \qi +q_1 \qk$, if $q_2>0$; if $ q=q_0+q_2 \qj$ with $q_2<0$, simply take $h=\qi$.  
\item[(iii)] Take $h=(1+q_1)-q_3\qj-(1-q_2)\qk$ , if $q_1+q_2\ne 0$, and
$h=(1+q_1)\qi+(1-q_1)\qj$, otherwise.
\end{enumerate}

As an immediate consequence of the previous theorem, we have the result contained in the following corollary.

\begin{corollary}
Two non-real coquaternions $p$ and $ q$ are similar if and only if they satisfy the following conditions:
\begin{equation} \label{condSim}
\re( p)=\re( q )\quad and \quad \dv(p)=\dv(q).
\end{equation}
\end{corollary}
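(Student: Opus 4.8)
The plan is to derive the corollary directly from Theorem~\ref{thmJordanForms} by showing that the pair $(\re(q),\dv(q))$ is a complete invariant of each similarity class of non-real coquaternions. First I would establish the \emph{necessity} of the conditions in \eqref{condSim}: if $p \sim q$, then there is an invertible $h$ with $q = h^{-1} p h$. I would verify that both $\re$ and $\dv$ are preserved under this conjugation. For the real part, note that $\re(q) = \frac{1}{2}(q + \overline{q})$ and that conjugation is an anti-automorphism satisfying $\overline{h^{-1} p h} = \overline{h}\,\overline{p}\,\overline{h^{-1}}$; combined with the fact that $\re$ is a trace-like functional invariant under similarity, this gives $\re(p) = \re(q)$. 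For the determinant, I would use that $\dt$ is multiplicative, $\dt(ab) = \dt(a)\dt(b)$, so that $\dt(h^{-1} p h) = \dt(h^{-1})\dt(p)\dt(h) = \dt(p)$; since similar coquaternions share the same real part, and $\dt(q) = \re(q)^2 - \dv(q)$ by \eqref{detq} (as $\dt(q) = q_0^2 + q_1^2 - q_2^2 - q_3^2$ and $\dv(q) = q_1^2 - q_2^2 - q_3^2$), equality of real parts and determinants forces $\dv(p) = \dv(q)$.

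For the \emph{sufficiency} direction, I would invoke Theorem~\ref{thmJordanForms} as the engine. Given two non-real coquaternions $p$ and $q$ with $\re(p) = \re(q) =: q_0$ and $\dv(p) = \dv(q)$, I would split into the three cases according to the sign of the common value $\dv(p) = \dv(q)$. In each case the theorem asserts that $p$ and $q$ are each similar to the \emph{same} canonical representative: if the common determinant of the vector part is positive, both are similar to $q_0 + \sqrt{\dv(q)}\,\qi$; if negative, both to $q_0 + \sqrt{-\dv(q)}\,\qj$; and if zero, both to $q_0 + \qi + \qj$. Since similarity is an equivalence relation (in particular transitive and symmetric), having a common representative yields $p \sim q$.

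The main subtlety to handle carefully is the necessity direction, specifically checking that $\re$ and $\dv$ really are conjugation-invariants; the sufficiency direction is essentially immediate once the theorem is in hand. The cleanest route for necessity is to work through the matrix model: using the representation of a coquaternion by the real $2\times 2$ matrix whose determinant is $\dt(q)$ (as recorded in the Remark following \eqref{detq}), similarity of coquaternions corresponds to conjugation of the associated matrices, and both the trace (which recovers $2\re(q)$) and the determinant are classical conjugation-invariants. Alternatively, a direct algebraic argument using the multiplicativity of $\dt$ and the invariance of $\re$ suffices and avoids any appeal to a matrix representation. I would present the algebraic version for self-containedness, reserving the matrix viewpoint as a remark. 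The only care needed is to confirm that the identity $\dt(q) = \re(q)^2 - \dv(q)$ lets one trade equality of determinants for equality of $\dv$ once equality of real parts is known — a one-line computation from \eqref{detq}.
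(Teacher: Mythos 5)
Your proposal is correct and follows essentially the route the paper intends when it calls the corollary an ``immediate consequence'' of Theorem~\ref{thmJordanForms}: necessity from the similarity-invariance of $\re$ (cyclicity/trace) and $\dt$ (multiplicativity), sufficiency by noting that the theorem sends both $p$ and $q$ to the same canonical representative and then using transitivity of $\sim$. The only blemish is a harmless sign slip: from \eqref{detq} one gets $\dt(q)=(\re(q))^2+\dv(q)$, not $(\re(q))^2-\dv(q)$, which does not affect the trade between equality of determinants and equality of $\dv$.
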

%----------------------------------------------
\begin{remark}{\rm 
Since, for any coquaternion $q$, we have $\dt(q)=(\re (q))^2+\dv(q)$ conditions (\ref{condSim}) are equivalent to  
\begin{equation}
\label{condSim2}
\re( p)=\re (q) \quad and  \quad \dt(p)=\dt(q).
\end{equation}}
\end{remark}

We should emphasize that, since any $q \in\R$ is never similar to any other coquaternion, the conditions (\ref{condSim}) (or (\ref{condSim2})) guarantee the similarity of  $p$ and $q$ only if both these coquaternions are non-real.
In connection to this, Janovsk\'a and Opfer \cite{JanOpf2014} introduced the notion of {\textit{quasi-similarity}} for any two coquaternions.
\begin{definition}
We say that two coquaternions $p$ and $q$ are 
quasi-similar, and write $p{\approx} q$, if and only if 
they satisfy conditions (\ref{condSim}).
\end{definition} 
Quasi-similarity is an equivalence relation in $\HCO$; the equivalence class of $q$ with respect to this relation is called the {\textit{quasi-similarity class}} of $q$ and will be denoted by $\ll q\rr$.\footnote{It is important to refer that
we use  different notations from the ones introduced in  \cite{JanOpf2014}, where the symbol
$\overset{q}{\sim}$ is used for the quasi-similarity relation and the quasi-similarity class of $u \in \HCO$ is denoted by $[u]_q$. Since we frequently use $q$ for a coquaternion, we found convenient to adopt  different notations.}

It is convenient to introduce the following definition.
\begin{definition}
A coquaternion $q$  is said to be of Type~1, Type~2 or Type~3, depending on whether $\dv(q)>0$, $\dv(q)<0$ or $\dv(q)=0$, respectively.\footnote{A coquaternion $q$ is usually classified as {\em time-like}, {\em space-like} or {\em light-like} according to
$\dt(q)>0$, $\dt(q)<0$ or $\dt(q)=0$, respectively.  Hence, Type~1, Type~2 and Type~3 coquaternions can also be described as coquaternions whose vector part is time-like, space-like or light-like, respectively.} 
\end{definition}

We have
\begin{align}\label{similarityClass}
\llbracket q \rrbracket&= \{p \in \HCO : \re( p) = \re( q )\ and \ \dv(p)= \dv(q)\}	\nonumber \\%[-1.5ex]
&=\{p_0+p_1\qi+p_2 \qj+p_3 \qk: p_0=q_0 \ and\ p_1^2-p_2^2-p_3^2= \dv(q)\}.
\end{align}
Thus, the quasi-similarity class $\ll q\rr$ can be identified with an hyperboloid in the hyperplane $\{(x_0,x_1,x_2,x_3)\in\R^4:x_0=q_0\}$. This will be:
\begin{enumerate}
\item an hyperboloid of two sheets, 
if $\dv(q) >0$, i.e. if $q$ is of Type 1; in this case $\ll q\rr=[q]=[q_0+\sqrt{\dv(q)}\,\qi]$;
\item  
an hyperboloid of one sheet, 
if $\dv(q) <0$, i.e. if $q$ is of Type 2; in this case $\ll q\rr=[q]=[q_0+\sqrt{-\dv(q)}\, \qj]$;
\item
 a degenerate hyperboloid (i.e. a cone),  
if $\dv(q)=0$, i.e. if $q$ is of Type 3; in this case, $ \ll q \rr=\ll q_0 \rr $ and:
\begin{enumerate}
\item[\rm (i)]
if $q \in \R$, $[q]=\{q_0\};$
\item[\rm (ii)]
if $q \not \in \R$, $[q]=[q_0+\qi+\qj]=\ll q_0\rr \setminus \{q_0\}$.
\end{enumerate}
\end{enumerate}

Note that no quasi-similarity class reduces to a single point and also that any quasi-similarity class contains 
a non-real element.

\section{Unilateral coquaternionic polynomials}\label{section3}
In this section we study polynomials defined over the algebra of coquaternions with special interest on the number and nature of their zeros.
\subsection{Definition and basic results}
Unlike the real or complex case, there are
several possible ways to define coquaternionic polynomials, since the coefficients can be taken to be
on the right, on the left or on both sides of the variable. In this paper, we will restrict our attention to polynomials
whose coefficients are  located on the left of the variable, i.e. we only consider the set of polynomials of the form
\begin{equation}
\label{oneSidedPols}
P(x)=c_n x^n +c_{n-1} x^{n-1} + \cdots+c_{1}x+c_0, \ c_i \in \HCO.
\end{equation}
 We define the 
 addition and multiplication of such polynomials as in the commutative case where the variable  commutes  with the coefficients.

With these operations, this set becomes   a ring, referred to as the ring of 
(left) {\em unilateral}, {\em one-sided} or {\em simple} polynomials in $\HCO$ and denoted by $\HCO[x].$\footnote{Right unilateral polynomials are defined in an analogous manner, by considering the coefficients on the right of the variable; all the results for left unilateral polynomials have corresponding results for right unilateral  polynomials and hence we restrict our study to polynomials of the first type.} 

As usual, if $c_n \ne 0$, we say that the {\em degree} of the polynomial $P(x)$ is $n$ and  refer to $c_n$ as the leading coefficient of the polynomial. 
When $c_n=1$, we say that $P(x)$ is {\em monic}.
%If the coefficients $c_i$ in (\ref{oneSidedPols}) are real, then we say that $P(x)$ is a {\em real %polynomial}.

Naturally, due to the non-commutativity of the product in $\HCO$, the product of polynomials is also non-commutative. However, as for the product of coquaternions, a polynomial with real
coefficients commutes with any other polynomial.
 
For a given coquaternion $q$, let ${\mathscr E}_q: \HCO[x] \rightarrow \HCO$ be the {\em evaluation map}
at $q$,  defined, for the  polynomial  given by (\ref{oneSidedPols}), by 
${\mathscr E}_q(P(x))=c_n q^n +c_{n-1} q^{n-1} + \cdots+c_1q+c_0.$
Due to the way we defined the product of polynomials, this map 
is not a ring homomorphism, i.e., in general, we do not have 
$
{\mathscr E}_q(P(x) Q(x)) = {\mathscr E}_q(P(x)) {\mathscr E}_q(Q(x)).
$

\begin{remark}{\rm 
Since all the polynomials considered will be in the indeterminate $x$, we will usually omit the reference to this variable and write simply
$P$ when referring to an element  $P(x) \in \HCO[x]$; an expression of the form $P(q)$, with $q\ne x$, will be used 
to denote the evaluation of $P$ at a specific value $q \in \HCO$, i.e. $P(q)={\mathscr E}_q(P(x)).$}
\end{remark}

We say that a polynomial $R\in \HCO[x]$ is a {\em right divisor} ({\em left divisor}) of the polynomial $P$, if there exists a polynomial $Q$ such that $P = QR$ ($P=RQ$).
We say that a polynomial $D$ is a divisor of the polynomial $P$, if $D$ is simultaneously a left and a right divisor of $P$.

A coquaternion $q$ such that $P(q)=0$ is called a {\em zero} or a {\em root} of $P$. We will use $Z(P)$ to denote the {\em zero-set} of $P$ i.e.\! the set of all zeros of $P$.
 
\begin{theorem}[Factor Theorem] (\cite[Proposition~(16.2)]{Lam1991}) \label{FT}
Let $P(x)$ be a given (non-zero polynomial) in $\HCO[x]$. An element $q\in \HCO$ is a zero  of $P$ if and only if $(x-q)$ is a right divisor of $P(x)$ in $\HCO[x]$, i.e.\! if and only if  there exists a polynomial $Q(x) \in \HCO[x]$ such that
$
P(x)=Q(x)(x-q).
$
\end{theorem}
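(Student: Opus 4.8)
The plan is to prove a one-sided remainder theorem and read both implications off it. The essential tool is right division by the monic linear polynomial $(x-q)$. Because $(x-q)$ has leading coefficient $1$, which is invertible and central, the ordinary long-division algorithm goes through unchanged in the noncommutative ring $\HCO[x]$: starting from $P$ of degree $n$ with leading coefficient $c_n$, subtracting $c_n x^{n-1}(x-q)$ cancels the top term and lowers the degree, and iterating produces $Q(x)\in\HCO[x]$ together with a constant remainder $r\in\HCO$ such that
$$P(x)=Q(x)(x-q)+r.$$
This quotient and remainder are unique: if $Q_1(x)(x-q)+r_1=Q_2(x)(x-q)+r_2$ then $(Q_1-Q_2)(x)(x-q)=r_2-r_1$, and since multiplying a nonzero polynomial by the monic factor $(x-q)$ raises its degree by exactly one, the left-hand side would have degree at least $1$ unless $Q_1=Q_2$, which then forces $r_1=r_2$.

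The next step is to identify the remainder $r$ with the evaluation $P(q)$. First I would compute the remainder of a single monomial $x^k$ on right division by $(x-q)$. Writing $x^k=Q_k(x)(x-q)+r_k$ and using that the indeterminate commutes with the coefficient $r_k$, one obtains
$$x^{k+1}=x\,x^k=xQ_k(x)(x-q)+x r_k=\bigl(xQ_k(x)+r_k\bigr)(x-q)+r_k q,$$
so the remainders obey the recursion $r_{k+1}=r_k q$ with $r_0=1$, whence $r_k=q^k$. Multiplying on the left by $c_k$ and summing over $k$ then gives
$$P(x)=\Bigl(\sum_k c_k Q_k(x)\Bigr)(x-q)+\sum_k c_k q^k=Q(x)(x-q)+P(q),$$
so the remainder is exactly $P(q)=\mathscr{E}_q(P(x))$.

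With this formula in hand the theorem is immediate. If $q$ is a zero then $P(q)=0$, so $P(x)=Q(x)(x-q)$ and $(x-q)$ is a right divisor. Conversely, if $P(x)=Q(x)(x-q)$ for some $Q$, comparing with $P(x)=Q(x)(x-q)+P(q)$ and invoking uniqueness of the remainder forces $P(q)=0$. The point deserving care --- and the only genuine obstacle --- is precisely this converse: since, as stressed earlier in the text, $\mathscr{E}_q$ is \emph{not} a ring homomorphism, one may not simply substitute $x=q$ in $P=Q(x-q)$ to conclude $Q(q)(q-q)=0$. The remainder theorem circumvents this difficulty; alternatively, one can verify directly that $\mathscr{E}_q\bigl(A(x)(x-q)\bigr)=0$ for every $A(x)=\sum_j a_j x^j$, because $\mathscr{E}_q$ applied to $\sum_j a_j x^{j+1}-\sum_j (a_j q)x^j$ equals $\sum_j a_j q^{j+1}-\sum_j a_j(q q^j)=0$ by associativity of the coquaternionic product.
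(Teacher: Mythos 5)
The paper offers no proof of its own for this statement; it simply cites \cite[Proposition~(16.2)]{Lam1991}. Your argument is correct and is essentially the standard one found there: right division by the monic factor $(x-q)$, identification of the constant remainder with $\mathscr{E}_q(P)$ (your recursion $r_{k+1}=r_kq$ is just the telescoping identity $x^i-q^i=\bigl(\sum_j q^{i-1-j}x^j\bigr)(x-q)$ in disguise), and uniqueness of the remainder for the converse. You also correctly isolate the one genuine subtlety --- that $\mathscr{E}_q$ is not a ring homomorphism, so naive substitution in $P=Q\cdot(x-q)$ is not allowed --- which is precisely why the paper emphasizes that point before stating the theorem.
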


This theorem has the following immediate corollary.

\begin{corollary}
Let 
$P(x)= L(x) R(x) $ with $L(x), R(x) \in \HCO[x]$. Then, 
all the zeros of $R(x)$ are zeros of $P(x)$.
\end{corollary}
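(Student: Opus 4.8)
The plan is to reduce everything to two applications of the Factor Theorem (Theorem~\ref{FT}), thereby sidestepping the fact that the evaluation map ${\mathscr E}_q$ is \emph{not} a ring homomorphism. Indeed, the tempting one-line argument ``$P(q)=L(q)R(q)=L(q)\cdot 0=0$'' is not available here, precisely because in general ${\mathscr E}_q(L(x)R(x))\ne {\mathscr E}_q(L(x))\,{\mathscr E}_q(R(x))$; so I would deliberately avoid evaluating the product directly and route the whole argument through divisibility instead.

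First I would take an arbitrary $q\in Z(R)$, so that $R(q)=0$. By the Factor Theorem, this is equivalent to $(x-q)$ being a right divisor of $R(x)$ in $\HCO[x]$; that is, there exists $S(x)\in\HCO[x]$ with $R(x)=S(x)(x-q)$.

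Next I would substitute this factorization into $P(x)=L(x)R(x)$ and invoke associativity of the (noncommutative) polynomial product to write $P(x)=L(x)\bigl(S(x)(x-q)\bigr)=\bigl(L(x)S(x)\bigr)(x-q)$. Hence $(x-q)$ is a right divisor of $P(x)$ as well.

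Finally, applying the Factor Theorem once more --- now in the direction ``right divisor $\Rightarrow$ zero'' --- I would conclude that $q$ is a zero of $P$, i.e.\ $q\in Z(P)$. Since $q\in Z(R)$ was arbitrary, this yields $Z(R)\subseteq Z(P)$, which is exactly the assertion. The only point requiring care is to resist the homomorphism shortcut; once one recognizes that the claim is a statement about right divisibility rather than about evaluating a product, there is no genuine obstacle, as both implications of the Factor Theorem are already at hand.
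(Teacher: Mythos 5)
Your proof is correct and follows exactly the route the paper intends: the corollary is stated as an immediate consequence of the Factor Theorem (Theorem~\ref{FT}), applied once to write $R(x)=S(x)(x-q)$ and once more to conclude from $P(x)=\bigl(L(x)S(x)\bigr)(x-q)$ that $q$ is a zero of $P$. Your explicit remark that the evaluation map is not a ring homomorphism, so the shortcut $P(q)=L(q)R(q)$ must be avoided, is precisely the point that makes the divisibility detour necessary.
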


\subsection{Characteristic polynomial of a class}

Given a coquaternion $q \in \HCO$,  consider the following 
 polynomial
$$
(x-q)(x-\overline{q}) =x^2 - 2 \re( q )\, x + \dt(q).
$$
Since this polynomial depends only on $ \re( q)$ and $\dt(q)$, we immediately  conclude that this is an invariant of the quasi-similarity of $q$.
We will call it the characteristic polynomial of  $\ll q\rr$ and  will denote it by 
$\CP_{\ll q\rr}$\footnote{This polynomial is more commonly referred as  the characteristic polynomial of the coquaternion $q$. We prefer to use our  denomination to emphasize the biunivocal relation between quasi-similarity classes and characteristic polynomials.}, i.e. 
\begin{equation} \label{charPol}
\CP_{\ll q\rr}(x):=(x-q)(x-\overline{q}) =x^2 - 2 \re(q) \, x + \dt(q).
\end{equation}
Note that 
the discriminant $\Delta$ of the characteristic  polynomial (\ref{charPol}) is given 
by 
$$
\Delta= 4 (\re(q))^2 -4 \dt(q)= -4 \dv(q).
$$
This means that  $\CP_{ \ll q\rr }$ will be:
\begin{enumerate}
\item[\rm (i)] 
 an irreducible polynomial (over the reals), if $\dv(q)>0$, i.e. if $q$ is of Type~1;
 \item[\rm (ii)] a polynomial
of the form $(x-r_1)(x-r_2)$ with $r_1, r_2 \in \R, r_1 \ne r_2$, if $\dv(q)<0$, i.e. if
$q$ is of Type~2;
\item[\rm (iii)]  a polynomial of the form
$(x-r)^2,$ with $ r \in \R$, if $\dv(q)=0$, i.e. if $q$ is of Type~3.
\end{enumerate}
On the other hand, any second degree monic polynomial with real coefficients is the characteristic polynomial of a (uniquely defined) quasi-similarity class.
In fact, let $p_2(x)=x^2+bx +c$ with $b,c \in \R$ and let $\Delta=b^2 -4 c$. Considering $p_2$ as a polynomial
in $\C[x]$, we have:
\begin{enumerate}
\item[\rm (i)] 
if $\Delta<0$, then $p_2$ has two (distinct) complex conjugate roots, $w$ and $\overline{w}$; hence, $p_2(x)=(x-w)(x-\overline{w})$ i.e. $p_2=\CP_{\ll w \rr}$.
\item[\rm (ii)]
If $\Delta=0$, then $p_2(x)$ has a double real root $r$, i.e. 
$p_2(x)=(x-r)^2 $ and so $p_2=\CP_{\ll r \rr} $.
\item[\rm (iii)]
If $\Delta>0$, then $p_2$ has two distinct real roots $r_1, r_2$, i.e. 
$p_2(x)=(x-r_1)(x-r_2)=x^2-(r_1+r_2) x+r_1 r_2$ and  it is easy to see that 
$p_2=\CP_{\ll p\rr }$, with $ p$ the perplex number $p=\frac{r_1+r_2}{2}+\frac{r_1-r_2}{2}\qj.$
\end{enumerate} 

 \vskip .1cm
The next theorem states an important property of the zero-set  of characteristic polynomials; see e.g.,
\cite[Theorem 4]{falcaoICCSA2017} for this and other properties of $Z(\CP_{\ll q\rr}).$
\begin{theorem} \label{theoremPropCharPol}
Let $\CP_{\ll q\rr}$ be the characteristic polynomial of a given quasi-simila\-ri\-ty class $\ll q\rr$. Then
$
\ll q\rr \subseteq Z(\CP_{\ll q\rr} ).
$
\end{theorem}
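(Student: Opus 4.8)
The plan is to fix an arbitrary $p\in\ll q\rr$ and show that $\CP_{\ll q\rr}(p)=0$. The starting observation is that membership in the quasi-similarity class is, by the defining identity \eqref{similarityClass}, exactly the pair of conditions $\re(p)=\re(q)$ and $\dv(p)=\dv(q)$, which by the remark following the corollary is equivalent to $\re(p)=\re(q)$ and $\dt(p)=\dt(q)$. Consequently the two real quadratics $\CP_{\ll q\rr}(x)=x^2-2\re(q)x+\dt(q)$ and $\CP_{\ll p\rr}(x)=x^2-2\re(p)x+\dt(p)$ literally coincide. So the theorem reduces to the single assertion that every coquaternion $p$ is a zero of its own characteristic polynomial, $p\in Z(\CP_{\ll p\rr})$; note that this reduction treats Types~1, 2 and~3 uniformly, with no case distinction on the sign of $\dv$.

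The step that requires care is that the evaluation map ${\mathscr E}_p$ is \emph{not} a ring homomorphism, so one may not simply substitute $x=p$ into the factored form $(x-p)(x-\overline{p})$ and read off the spurious ``$(p-p)(p-\overline{p})=0$.'' I would sidestep this in the following way. Because $\CP_{\ll p\rr}$ has real coefficients, its two linear factors commute in $\HCO[x]$: expanding either $(x-p)(x-\overline{p})$ or $(x-\overline{p})(x-p)$ produces the same polynomial $x^2-2\re(p)x+\dt(p)$, since in the polynomial ring the indeterminate commutes with the coefficients and both $p+\overline{p}=2\re(p)$ and $p\overline{p}=\overline{p}p=\dt(p)$ are real scalars. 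Hence $\CP_{\ll q\rr}(x)=\CP_{\ll p\rr}(x)=(x-\overline{p})(x-p)$ exhibits $(x-p)$ as a right divisor, and the Factor Theorem (Theorem~\ref{FT}) then yields $p\in Z(\CP_{\ll q\rr})$ at once. As $p\in\ll q\rr$ was arbitrary, this gives $\ll q\rr\subseteq Z(\CP_{\ll q\rr})$.

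Equivalently, I could finish by the direct computation $\CP_{\ll p\rr}(p)=p^2-2\re(p)p+\dt(p)=p^2-(p+\overline{p})p+p\overline{p}=-\overline{p}p+p\overline{p}=0$, which is arguably more self-contained. Either route hinges on the same single fact, so I expect that to be the only nontrivial point: the scalar identity $p\overline{p}=\overline{p}p\in\R$. This is what makes the two orderings of the factors agree and what collapses the final expression to zero; it follows by writing $p=p_0+\V(p)$ and checking that both products equal $p_0^2-\V(p)^2$, with $\V(p)^2=-\dv(p)$ real. Everything else is routine bookkeeping with conjugation and real parts.
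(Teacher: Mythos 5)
Your proof is correct: the reduction to $\CP_{\ll q\rr}=\CP_{\ll p\rr}$ via the invariance of $\re$ and $\dt$ on a quasi-similarity class, followed by either the Factor Theorem applied to $(x-\overline{p})(x-p)$ or the direct computation using $p\overline{p}=\overline{p}p=\dt(p)\in\R$, is a complete argument, and you rightly flag that the evaluation map not being a ring homomorphism is the one point needing care. The paper itself gives no proof of this theorem --- it only cites \cite[Theorem 4]{falcaoICCSA2017} --- so there is nothing in-paper to compare against; your argument is the standard one and stands on its own.
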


\subsection{Zeros of polynomials and the companion polynomial}
Given a polynomial $P \in \HCO[x]$, the  polynomial obtained from $P$ by replacing each coefficient by its conjugate is called the {\em conjugate of $P$} and denoted by $\overline{P}$. The properties given in the following proposition are easily verified.
\begin{proposition}\label{propConjugatePolynomial}
Let $P, Q\in \HCO[x]$. Then:
\begin{enumerate}
\item[\rm (i)] $\overline{P Q}=\overline{Q} \,\,  \overline{P}$
\item[\rm (ii)] $P  \overline{P}=\overline{P}  P$ is a polynomial with real
coefficients.
\end{enumerate}
\end{proposition}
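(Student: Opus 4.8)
The plan is to prove Proposition~\ref{propConjugatePolynomial} by direct computation, treating the two parts separately and exploiting the single nontrivial algebraic fact we have available about conjugation, namely that $\overline{ab}=\overline{b}\,\overline{a}$ for individual coquaternions. The whole argument should reduce to bookkeeping once this coquaternionic identity is extended to the level of polynomials.

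For part~(i), first I would note that conjugation on $\HCO$ is \emph{real-linear}: for $a,b\in\HCO$ and $\lambda\in\R$ we have $\overline{a+b}=\overline{a}+\overline{b}$ and $\overline{\lambda a}=\lambda\overline{a}$, which is immediate from the coordinatewise definition $\overline{q}=q_0-q_1\qi-q_2\qj-q_3\qk$. Writing $P(x)=\sum_i p_i x^i$ and $Q(x)=\sum_j q_j x^j$, the product is $P(x)Q(x)=\sum_k\bigl(\sum_{i+j=k} p_i q_j\bigr)x^k$, since the indeterminate commutes with all coefficients by the definition of polynomial multiplication given earlier. Taking conjugate coefficientwise and applying $\overline{p_i q_j}=\overline{q_j}\,\overline{p_i}$ together with real-linearity, the coefficient of $x^k$ in $\overline{PQ}$ becomes $\sum_{i+j=k}\overline{q_j}\,\overline{p_i}$. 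On the other hand, $\overline{Q}(x)\,\overline{P}(x)=\sum_k\bigl(\sum_{j+i=k}\overline{q_j}\,\overline{p_i}\bigr)x^k$, which matches term by term; hence $\overline{PQ}=\overline{Q}\,\overline{P}$. The key point that must be used carefully is that reversing the order of the two factors is exactly what compensates for the order reversal $\overline{p_iq_j}=\overline{q_j}\,\overline{p_i}$ produced by conjugating each coquaternionic product, so the sums coincide.

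For part~(ii), I would first establish that $P\overline{P}=\overline{P}\,P$ by applying part~(i) with $Q=\overline{P}$: since $\overline{\overline{P}}=P$ (conjugation being an involution on $\HCO$, hence on coefficients), we get $\overline{P\,\overline{P}}=\overline{\overline{P}}\,\overline{P}=P\overline{P}$, so the polynomial $P\overline{P}$ equals its own conjugate. A polynomial that is invariant under coefficientwise conjugation must have each coefficient fixed by conjugation, and a coquaternion $q$ satisfies $\overline{q}=q$ if and only if its vector part vanishes, i.e. $q\in\R$. Therefore every coefficient of $P\overline{P}$ is real, which is the claim; the commutativity $P\overline{P}=\overline{P}\,P$ then follows because a real-coefficient polynomial commutes with every polynomial, as was remarked earlier in the text.

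I do not anticipate a genuine obstacle here, as both statements are labelled ``easily verified''; the only point demanding a little care is keeping the index reversal straight in part~(i), ensuring that the transition from $\overline{p_iq_j}=\overline{q_j}\,\overline{p_i}$ to the claimed factored form $\overline{Q}\,\overline{P}$ (rather than $\overline{P}\,\overline{Q}$) is justified by how the Cauchy product reindexes. I would present part~(i) as the substantive computation and then derive part~(ii) as an essentially formal consequence, invoking the characterization of real coquaternions by the fixed-point condition $\overline{q}=q$ and the previously noted fact that real polynomials are central.
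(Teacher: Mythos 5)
The paper itself gives no proof of this proposition (it is dismissed as ``easily verified''), so there is no argument of the authors to compare against; I will assess yours on its own terms. Part (i) is correct: granting the anti-automorphism property $\overline{ab}=\overline{b}\,\overline{a}$ of conjugation on $\HCO$ (which you invoke without checking, but which is routine to verify on the basis $\{1,\qi,\qj,\qk\}$), the Cauchy-product reindexing is exactly right. Your derivation that $P\overline{P}$ is self-conjugate, hence has real coefficients, is also correct.

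The gap is in your justification of the equality $P\overline{P}=\overline{P}P$. You assert that ``the commutativity then follows because a real-coefficient polynomial commutes with every polynomial.'' This is a non sequitur: knowing that the \emph{product} $P\overline{P}$ is central tells you that $P\overline{P}$ commutes with other polynomials; it does not tell you that the two \emph{factors} $P$ and $\overline{P}$ commute with each other. In a general noncommutative ring, $ab$ being central does not imply $ab=ba$ (in $2\times 2$ real matrices take $a=E_{11}$, $b=E_{21}$: then $ab=0$ is central while $ba=E_{21}\ne ab$), and $\HCO[x]$ does contain zero divisors, so the implication cannot simply be waved through. To close the gap, compare coefficients directly: the coefficient of $x^k$ in $P\overline{P}$ is $\sum_{i+j=k}p_i\overline{p_j}$ and in $\overline{P}P$ it is $\sum_{i+j=k}\overline{p_i}p_j$; pairing the terms indexed by $(i,j)$ and $(j,i)$ and using $p\overline{q}+q\overline{p}=2\re(p\overline{q})=\overline{q}p+\overline{p}q$ (each side is a coquaternion plus its own conjugate, hence real, and $\re(uv)=\re(vu)$ by the symmetry of the real part of the product \eqref{product}) shows that the two sums coincide and are real, which yields the identity and the realness simultaneously. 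Alternatively, after establishing that \emph{both} $P\overline{P}$ and $\overline{P}P$ are real by your self-conjugacy argument, you may write $(P\overline{P})P=P(\overline{P}P)=(\overline{P}P)P$ using the centrality of $\overline{P}P$, and cancel $P$ by comparing leading coefficients (a nonzero real polynomial times a nonzero polynomial in $\HCO[x]$ is nonzero); but some such additional step is required before the commutativity can be claimed.
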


\begin{definition}
The polynomial 
\begin{equation}\label{compPolynomial}
{\cal C}_P=\overline{P}  P=P\overline{P}
\end{equation}
is called the companion polynomial of the polynomial $P$.
\end{definition}

\begin{remark}{\rm This polynomial already appears in the famous work \cite{Niven1941} of Niven, published in 1941  --- unfortunately with no name --- and has been used by many authors under different designations.
The name {\em companion polynomial} here adopted was introduced in \cite{JanOpf2010}.}
\end{remark}

In what follows, we restrict our attention to the study of monic polynomials, i.e.  we will consider only polynomials of the 
form
\begin{equation}\label{monicPolynomials}
P(x)=x^n+c_{n-1}x^{n-1} +\ldots+c_1 x+c_0, \quad c_i \in \HCO.
\end{equation}
 Note that, in what concerns the zeros of polynomials, the study of polynomials of this type is equivalent to the study of polynomials of the 
form (\ref{oneSidedPols}) with a non-singular leading coefficient $c_n$.\footnote{The fact that $P$ is monic (or with  non singular leading coefficient) guarantees that the companion polynomial is a polynomial of degree $2n$ and avoids pathological situations such as having a non-zero polynomial whose companion polynomial is the zero polynomial; see 
\cite{JanOpf2014} for such an example.}

Niven~\cite{Niven1941} proved that every non-constant unilateral polynomial with quaternionic coefficients  always has a quaternionic zero, thus establishing that the ``Fundamental Theorem of Algebra" holds for
unilateral quaternionic polynomials.\footnote{This result was later extended to more general quaternionic polynomials in \cite{Niven1944}.}
The situation, in what concerns polynomials defined over the algebra of coquaternions is, however, different. 
In fact, as observed by \"{O}zdemir \cite[Theorem 9-i.]{Ozdemir2009}, 
any equation of the  form
$x^n-q=0$, with $n$ even and $q$ a coquaternion  with negative determinant does not have a solution; other examples of coquaternionic polynomials with no roots can be found in \cite{JanOpf2014}.

The following theorem plays an important role in the  root-finding procedure that we are going to propose.
 
\begin{theorem}\label{thmFTCP}
Let $P \in \HCO[x]$. If $z\in \HCO$ is a zero of $P$, then $\CP_{\ll z\rr}$ is a divisor of the companion polynomial
of $P$.
\end{theorem}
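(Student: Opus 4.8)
The plan is to obtain the divisibility from a single factorization identity, avoiding any evaluation arguments (which are delicate here because the evaluation map is not multiplicative and because $\HCO$ has zero divisors). The engine is the Factor Theorem together with the centrality of real polynomials in $\HCO[x]$.

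First I would use Theorem~\ref{FT}: since $z$ is a zero of $P$, there is a polynomial $Q\in\HCO[x]$ with $P = Q(x)\,(x-z)$. Next I would conjugate this factorization. By part~(i) of Proposition~\ref{propConjugatePolynomial}, conjugation reverses the order of a product, and since $\overline{(x-z)} = x-\overline{z}$ this gives $\overline{P} = (x-\overline{z})\,\overline{Q}$. Forming the companion polynomial in the order ${\cal C}_P = P\,\overline{P}$ then places the two linear factors side by side:
\[
{\cal C}_P = P\,\overline{P} = Q(x)\,(x-z)(x-\overline{z})\,\overline{Q}(x) = Q(x)\,\CP_{\ll z\rr}(x)\,\overline{Q}(x),
\]
where the last equality is just the definition~(\ref{charPol}) of the characteristic polynomial.

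To conclude, I would invoke the fact, recorded earlier, that a polynomial with real coefficients commutes with every element of $\HCO[x]$. Because $\CP_{\ll z\rr}$ is real, it can be moved past $Q(x)$, so that ${\cal C}_P = \CP_{\ll z\rr}\,\big(Q\,\overline{Q}\big) = \CP_{\ll z\rr}\,{\cal C}_Q$. This displays $\CP_{\ll z\rr}$ as a divisor of ${\cal C}_P$, with the (real) companion polynomial ${\cal C}_Q$ of $Q$ as the explicit cofactor.

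I do not expect a genuine obstacle. The only points requiring care are formal rather than substantive: keeping track of the order reversal under conjugation so that $\overline{P}$ supplies the factor $(x-\overline{z})$ adjacent to the $(x-z)$ coming from $P$, and justifying the commutation of $\CP_{\ll z\rr}$ with $Q$, which is precisely the stated centrality of real polynomials. It is worth emphasizing that no case distinction on the type of $z$ and no discussion of zero divisors is needed; this is exactly what should make the argument much shorter than one passing through the complex roots of ${\cal C}_P$.
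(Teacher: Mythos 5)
Your argument is correct and is essentially identical to the paper's own proof: both use the Factor Theorem to write $P=Q\,(x-z)$, conjugate via Proposition~\ref{propConjugatePolynomial}(i) to place $(x-z)(x-\overline{z})=\CP_{\ll z\rr}$ adjacent in $P\overline{P}$, and then commute the real polynomial $\CP_{\ll z\rr}$ past $Q\overline{Q}={\cal C}_Q$. No differences worth noting.
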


\begin{proof}
Let $z$ be a zero of $P$; by Theorem~\ref{FT}, we know that $(x-z)$ is a right divisor of $P$, i.e. $P(x)=Q(x)  (x-z)$
for a given polynomial $Q \in \HCO[x]$. Thus
$$\begin{array}{ll}{\cal C}_P(x)&=P(x) \overline{P}(x)=
Q(x)  (x-z)  \overline{ (Q(x)  (x-z))}\\
&=Q(x) (x-z)   (x-\overline{z}) \overline{Q}(x)=Q(x)\CP_{\ll z\rr}(x)  \overline{Q}(x)\\
&= Q(x)   \overline{Q} (x)  \CP_{z}(x) ={\cal C}_Q (x)  \CP_{\ll z\rr}(x)= \CP_{\ll z\rr}(x) {\cal C}_Q (x) ,
\end{array}
$$
where we used the fact that a polynomial with real
coefficients commutes with any other polynomial. Hence,  $\CP_{\ll z\rr}$ is a divisor of ${\cal C}_P$, as we wished to prove. \qquad 
\end{proof}

The result of the previous theorem can also  be stated  as follows: if a certain quasi-similarity class 
$\ll z\rr$ of $\HCO$ contains a zero of $P$, then its characteristic polynomial $\CP_{\ll z\rr}$ divides the companion polynomial ${\cal C}_P$. This means, in particular, that there is no point in searching for zeros of $P$ in classes whose 
characteristic polynomial is  not a factor of  ${\cal C}_P$.  We are thus led to introduce the following 
definition.

\begin{definition}
A quasi-similarity class $\ll z\rr$ of $\HCO$ is called admissible (with respect to the zeros of a given polynomial $P$) if and only if the corresponding characteristic polynomial $\CP_{\ll z\rr}$ is a divisor of the companion polynomial of $P$.
\end{definition}

If $P$ is a monic polynomial of degree $n$, its companion polynomial ${\cal C}_P$ is a polynomial with real
coefficients of degree $2n$ and, as such, has $2n$ roots in $\C$.   Let these roots be  $w_1,\overline{w}_1, \ldots,w_m $, $\overline{w}_m \in \C \setminus \R$ and $r_1, r_2, \ldots, r_{s} \in \R$, where $s=2n-2m$, $0 \le m \le n$. Then, 
$$
{\cal C}_P(x)=\underbrace{(x-w_1) (x-\overline{w}_1)}_{\CP_{\ll w_1\rr}} \dots\underbrace{(x-w_1) (x-\overline{w}_m)}_{\CP_{\ll w_m\rr}}(x-r_1)  \ldots (x-r_s)
$$
and it is clear that the  characteristic polynomials (i.e. the real monic polynomials of degree two)  which divide ${\cal C}_P$  are the $m$ irreducible polynomials
\begin{equation}\label{chPcomplex}
\CP_{\ll w_i\rr }, i=1, \ldots,m,
\end{equation} 
and the  ${s \choose 2}$ polynomials $(x-r_j)(x-r_k)$; $j=1,\ldots, s-1,\,   k=j+1,\ldots,s,$ i.e. the polynomials
\begin{equation}\label{chPreal}
  \CP_{\ll p_{jk}\rr}
\quad {\rm with }\quad 
p_{jk}=\textstyle{\frac{r_j+r_k}{2}} + \textstyle{\frac{r_j-r_k}{2}}\, \qj; j=1,\ldots, s-1, \,  k=j+1,\ldots,s.
\end{equation}
Note that if $r_j$ is a multiple root, a polynomial of the form $(x-r_j)^2=\CP_{\ll r_j\rr }$ appears.
The maximum number of  such polynomials
occurs when $m=0$ and the $2n$  real roots of $ {\cal C}_P$ are all distinct, and is equal to ${2n \choose 2}=n(2n-1).$ 
Having in mind the correspondence between characteristic polynomials and quasi-similarity classes, we can then state the following result.
 \begin{theorem}\label{GM}
 If $P$ is a polynomial of degree $n$ in $\HCO[x]$, then the zeros of $P$ belong to, at most, 
 $n(2n-1)$ quasi-similarity classes. 
 \end{theorem}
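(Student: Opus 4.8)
The plan is to combine Theorem~\ref{thmFTCP} with the structural analysis of the companion polynomial that immediately precedes the statement. The key observation is that Theorem~\ref{thmFTCP} tells us every zero $z$ of $P$ lives in a quasi-similarity class $\ll z\rr$ whose characteristic polynomial $\CP_{\ll z\rr}$ divides $\mathcal{C}_P$. Hence, to bound the number of classes that can contain a zero, it suffices to bound the number of distinct real monic degree-two divisors of $\mathcal{C}_P$, since by the biunivocal correspondence between such polynomials and quasi-similarity classes (established in the discussion of $\CP_{\ll q\rr}$), distinct admissible classes correspond to distinct degree-two factors.

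First I would recall that, since $P$ is monic of degree $n$, its companion polynomial $\mathcal{C}_P = \overline{P}P$ is a \emph{real} polynomial of degree $2n$ (this is Proposition~\ref{propConjugatePolynomial}(ii) together with the degree count). By the Fundamental Theorem of Algebra applied over $\C$, it therefore has exactly $2n$ roots counted with multiplicity. Next I would enumerate the possible real monic degree-two divisors of $\mathcal{C}_P$ according to the classification of such polynomials into the irreducible case and the split (real-root) case. Writing the roots as $m$ conjugate pairs $w_i,\overline{w}_i$ together with $s=2n-2m$ real roots $r_1,\dots,r_s$, the irreducible degree-two factors are precisely the $m$ polynomials $\CP_{\ll w_i\rr}$, while the reducible degree-two factors are products $(x-r_j)(x-r_k)$ of pairs of real roots, each equal to some $\CP_{\ll p_{jk}\rr}$. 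Counting these gives $m$ admissible classes from the complex pairs and at most $\binom{s}{2}$ from the real roots.

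The final step is the optimization: the total number of admissible classes is at most $m + \binom{s}{2}$ with $s = 2n - 2m$, and I would show this is maximized at $m=0$, giving $\binom{2n}{2} = n(2n-1)$. Since every zero of $P$ belongs to an admissible class by Theorem~\ref{thmFTCP}, and there are at most $n(2n-1)$ admissible classes, the zeros of $P$ belong to at most $n(2n-1)$ quasi-similarity classes. I anticipate the main subtlety to be the bookkeeping around multiplicities and coincidences: when a real root $r_j$ is repeated, the factor $(x-r_j)^2 = \CP_{\ll r_j\rr}$ must be counted correctly, and one must verify that counting \emph{distinct} degree-two factors (rather than ordered pairs) is exactly what the correspondence with quasi-similarity classes demands, so that no admissible class is double-counted and none is missed. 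This is really a finite combinatorial count rather than a deep analytic obstacle, which is why this approach is so much cleaner than a direct attack on the zero-set itself.
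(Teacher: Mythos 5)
Your proposal is correct and follows essentially the same route as the paper: the paper likewise invokes Theorem~\ref{thmFTCP} to reduce the problem to counting the monic real quadratic divisors of the degree-$2n$ companion polynomial, splits its roots into $m$ conjugate pairs and $s=2n-2m$ real roots, and observes that the count $m+\binom{s}{2}$ is maximized at $m=0$, yielding $\binom{2n}{2}=n(2n-1)$. Your added care about repeated real roots and the bijection between distinct quadratic factors and quasi-similarity classes matches the paper's brief remark on the factor $(x-r_j)^2=\CP_{\ll r_j\rr}$.
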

 \begin{remark}{\rm $ $
 \begin{enumerate}
 \item 
 This can be seen as the analogue, in the coquaternionic setting, of a result first established by Gordon and Motzkin
 \cite{GordonMotzkin1965} for quaternionic polynomials.
 \item
 The result of the previous theorem was first stated, by using totally different arguments, in \cite[Corollary 5.3]{JanOpf2017}.
 We have, however, to refer that the proof contained in \cite{JanOpf2017} is, not only much more elaborate than the proof here presented, but also incomplete. In fact, Corollary 5.3. follows from the use of two theorems ---  Theorem 5.1 and Theorem 5.2. --- which deal, respectively, with the case of zeros belonging to a  similarity class of the form $[u+v \qi], v>0$  or to a similarity class $ [u+ v\qj], v>0$. So, the case of zeros in a class of the type $[u+\qi+\qj]$ (i.e. zeros whose vector part has zero determinant, but which are not real), is missing. 
 \end{enumerate}}
 \end{remark}
 
\subsection{Computing the zeros from the admissible classes}

We now
explain how to compute the zeros belonging to each of the admissible classes. 
The process is an adaption to the coquaternionic setting of the already referred process for quaternionic polynomials proposed by Niven in \cite{Niven1941}. Naturally, some differences occur due to the fact that $\HCO$, contrary to the algebra of quaternions, is not a division algebra. The same idea (although using a different computational procedure) was used in  \cite{JanOpf2014} and \cite{JanOpf2017}.

Naturally, for any monic polynomial of the first degree, $P(x)=x-q$, the only root of $P(x)$ is $q$, and so we will now consider only polynomials of degree $n\ge 2$.

In what follows, $\ll q\rr =\ll q_0+\V (q) \rr$ is an  admissible class of $P$, i.e.,
$\CP_{\ll q\rr}(x)$ divides ${\cal C}_P$.  
 This implies, in particular,  that all the elements in  $\ll q \rr$ are zeros of ${\cal C}_P$; see Theorems~\ref{FT} and \ref{theoremPropCharPol}.
 
We first note that, since the product of two polynomials in $\HCO[x]$ is defined in the usual manner, we can always use  the ``Euclidean Division Algorithm" to perform the division of two polynomials, provided that the leading coefficient of the divisor is non-singular. In particular, we can use it to divide  
 $P(x)$ by  the characteristic polynomial of the quasi-similarity class ${\ll q\rr}$, i.e. by the quadratic monic polynomial
 $x^2-2 \re(q) x+\dt(q)$. If we perform this division we will obtain 
\begin{equation}\label{DivChar}
P(x)=Q(x) \CP_{\ll q\rr}(x)+ A+Bx
\end{equation}
for some polynomial $Q(x)$ and values $A$ and $B$ which depend only on the coefficients $c_i$ of the given polynomial $P$ and on the values $\re(q)$ and $\dt(q)$.   
As in the case of complex or quaternionic polynomials, expanded synthetic division can  here be used  to obtain the expressions of $A$ and $B$  (see \cite{SmoktunowiczWrobel2005} and \cite{FalcaoMirandaSeverinoSoares2017} for computational details on the use of this shortcut method for dividing any polynomial by the special quadratic polynomial $\CP_{\ll q\rr}$, in the complex and quaternionic context, respectively). Concretely, we have
\begin{subequations}\label{fAB}
\begin{equation} 
A=c_0-\det(q)\,\alpha_0\qquad \text{and} \qquad  B=c_{1}+2\re(q)\,\alpha_{0}-\det(q)\,\alpha_{1},
\end{equation}
where $\alpha_j$ satisfy the following recurrence relations:
\begin{equation}
\begin{array}{ll}
\alpha_{n-1}=0,\  \alpha_{n-2}=1, \\[2ex]
\alpha_{k}= c_{k+2}+2\re(q)\,\alpha_{k+1}-\det(q)\,\alpha_{k+2};\ k=n-3,n-2,\dots,0.\\
\end{array}
\end{equation}
\end{subequations}
For all $z \in \ll q\rr$, we have  $\CP_{\ll q\rr}(z)=0$ (see Theorem~\ref{theoremPropCharPol}), and so we obtain\footnote{
This result is obtained in a different manner in \cite{PogoruiRDagnino2010} and also in \cite{JanOpf2014}, where a different computational procedure for obtaining $A$ and $B$ is proposed; counting the number of arithmetic operations involved, one can conclude (cf. \cite{FalcaoMirandaSeverinoSoares2017} for complexity and stability analysis) that, for $n>3$, the  process given by (\ref{fAB}) involves less computational effort than the method proposed in \cite{JanOpf2014} and \cite{PogoruiRDagnino2010}.}
\begin{equation}\label{PABz}
P(z)=A+B z.
\end{equation}
Hence, we conclude that  
a coquaternion $z \in \ll q \rr $ is a zero of the polynomial $P$ if and only it satisfies
\begin{equation}\label{condZero}
A+Bz=0.
\end{equation}
We now discuss several cases, depending on the values $A$ and $B$.

\vskip .2cm
\noindent{\bf Case 1} \ \ {\em  $B$ non-singular}

In this case, there is only one zero $z_0$ of $P$ in the class $\ll q \rr$,  given by the formula
\begin{equation}\label{expZ0}
z_0=- B^{-1} A=-\frac{\overline{B}A}{\dt(B)}, 
\end{equation}
as we will now show.
Clearly, $z_0$ given by (\ref{expZ0})  is the unique solution of (\ref{condZero}) and so it remains to prove that it belongs to $ \ll q \rr$. 
From (\ref{DivChar}), we have that
\begin{multline*}
{\cal C}_P(x)
= {\cal C}_{Q}(x) \CP_{\ll q\rr}(x) \CP_{\ll q\rr} (x)+ (A+Bx) \overline{Q}(x)  \CP_{\ll q\rr}(x)\\
 + Q(x) (\overline{A}+\overline{B}x)\CP_{\ll q\rr}(x)+\dt(A)+2 \re(A \overline{B})x+\dt(B)x^2.
\end{multline*}
Let $z$ be an element in $\ll q \rr$ with a non-zero vector part (as observed before, such an element always exists).
Recalling that ${\cal C}_P(z)=0$ and $\CP_{\ll q\rr} (z)=0$, we have
\begin{equation*}
\dt(B)z^2+2 \re(A \overline{B})z+\dt(A)=0 
\end{equation*}
and 
\begin{equation*}
z^2=2 \re(z) z-\dt(z).
\end{equation*}
Hence, we obtain
\begin{equation*}
\dt(B)(2 \re (z) z-\dt(z))+2 \re(A \overline{B})z+\dt(A)=0
\end{equation*}
or
\begin{eqnarray*}
\dt(B)\big(2 \re(z) (\re(z)+\V (z))-\dt(z) \big)+2 \re(A \overline{B})\big(\re(z)+\V (z)\big)+\dt(A)=0
\end{eqnarray*}
which implies
\begin{equation*}
\left\{
\begin{array}{l}
2 \re(z) \big(\dt(B)\re (z) +  \re(A \overline{B})\big)- \dt(B) \dt(z) +\dt(A)=0\\[10pt]
2 \bigl( \dt(B)\re(z) + \re(A \overline{B})\bigr)\V(z)=0
\end{array}\right. .
\end{equation*}
Since $\V(z)\ne 0$, we immediately conclude that 
\begin{equation*}
\dt(B)\re (z) + \re(A \overline{B})=0 \quad {\rm{and}} \quad  \dt(A)- \dt(B) \dt(z)=0 
\end{equation*}
or
\begin{equation*}
\re (z) =- \frac{\re(A \overline{B})}{\dt(B)} \quad {\rm{and}}\quad   \dt(z)=\frac{\dt(A)}{\dt(B)}.
\end{equation*}
Taking into account well-known properties of $\re (q)$ and $\det(q)$, we obtain from the expression (\ref{expZ0}) of $z_0$
\begin{equation*}
\re (z_0)=- \frac{\re(A \overline{B})}{\dt(B)}=\re (z)
\end{equation*}
and
\begin{equation*}
\dt(z_0)=\left(\frac{1}{\dt(B)}\right)^2 \dt(\overline{B})\dt(A)=\frac{\dt(A)}{\dt(B)}=\dt(z)
\end{equation*}
which shows that $z_0\in\ll z \rr=\ll q \rr$.\\[-2ex]

\noindent{\bf Case 2} -  $B=0$\\[-2ex]

\noindent \quad {\bf Case 2.1} - $A\ne 0$

In this case, equation (\ref{condZero}) cannot be satisfied, so there are no zeros  of $P$ in $\ll q \rr$.\\[-2ex]

\noindent \quad {\bf Case 2.2} - $A=0$

From (\ref{condZero}) we have that $P(z)=0 $ for all $z \in \llbracket q \rrbracket$, i.e. the whole hyperboloid $\ll q\rr$ is made up of zeros of $P$.\\[-2ex]

\noindent{\bf Case 3} \ \ {\em  $B$ singular, $B \ne 0$}

{\ }
The roots of $P$ belonging to $\llbracket q \rrbracket$ can be obtained  by studying the solvability of the system
\begin{equation}\label{system}
M_B z =- A
\end{equation}
where $z=(z_0,z_1,z_2,z_3)^{\rm T}$
and  $M_B$ is the matrix defined by (\ref{matrixLq}) (and $A$ is  seen as a  column vector) and selecting the solutions for which  
$\re(z)=q_0$ and $\dv(z)=\dv(q)$.\\[-2ex]

\noindent \quad {\bf Case 3.1} \ \ {\em   $M_B z =- A$ has no solution}

In this case, naturally, we conclude that there are no zeros of $P$ in $\ll q\rr$.\\[-2ex]

\noindent \quad {\bf Case 3.2}   \ \ {\em  $M_B z =- A$ is solvable}

Let  $\delta=(\delta_0,\delta_1,\delta_2,\delta_3)^{\rm T}$ be a solution 
of the system (\ref{system}), i.e. let $M_B \delta=-A$.
As it is well known, the general solution of the system is the sum of a particular solution 
with
the general solution of the associated homogeneous system, i.e. it is
given by 
$
z=u+\delta, 
$
with $u=(u_0,u_1,u_2,u_3)^{\rm T}$ such that  $M_B u=0$.  It can easily be shown that, when $B=b_0+b_1\qi+b_2\qj+b_3\qk $ is singular (but $B\ne 0$), the matrix $M_B$ has rank two and the general solution
$u$ of the homogeneous system $M_B u=0$ is given by
\begin{subequations}\label{CsDs}
\begin{equation}
\left\{
\begin{array}{l}
u_0=\alpha,\\
  u_1=\beta,\\
u_2=k_1 \alpha +k_2\beta,\\
u_3=k_2 \alpha-k_1 \beta; \  \alpha, \beta \in \R,
\end{array}
\right.
\end{equation}
with
\begin{equation}
k_{1}=-\left( \frac{b_0b_2+b_1b_3}{b_0^2+b_1^2}\right)   \quad {\rm and} \quad  k_2=\left( \frac{b_1b_2-b_0b_3}{b_0^2+b_1^2}\right).
\end{equation}
\end{subequations}
So, the solutions of (\ref{condZero}) are $z=z_0+z_1\qi+z_2\qj +z_3\qk $ with
\begin{equation}\label{genSol}
\left\{
\begin{array}{l}
z_0=\alpha+\delta_0,\\ 
z_1=\beta+\delta_1,\\
z_2=k_1 \alpha +k_2\beta+\delta_2,\\
z_3=k_2 \alpha-k_1 \beta+\delta_3; \  \alpha, \beta \in \R.
\end{array}
\right.
\end{equation}
There is no loss in generality in considering  a particular solution $\gamma$  of the system $M_Bz=-A$ with the form 
 $\gamma=(\gamma_0,\gamma_1,0,0)^{\rm T}$. 
This follows immediately by taking
$\alpha=-k_1 \delta_2-k_2 \delta_3$ and $\beta=-k_2 \delta_2+k_1\delta_3$ in (\ref{genSol}) and observing that the values $k_1$ and $k_2$ given by (\ref{CsDs}) satisfy $k_1^2+k_2^2=1$.
So, we can state that the solutions of (\ref{condZero}) are given by 
\begin{equation}\label{gensol12}
\left\{
\begin{array}{l}
z_0=\alpha+\gamma_0,\\ 
z_1=\beta+\gamma_1,\\
z_2=k_1 \alpha +k_2\beta,\\
z_3=k_2 \alpha-k_1 \beta; \  \alpha, \beta \in \R.
\end{array}
\right.
\end{equation}

We now need to select solutions such that $\re(z)=q_0$ and $ \dv(z) =\dv(q)$.
The first condition can always be satisfied provided we take  $\alpha =q_0-\gamma_0$.
On the other hand, 
the expression for the determinant of the vector part of $z$ 
is given  by  
\begin{equation*}
\dv(z)
= (\beta+\gamma_1)^2-(k_1 \alpha +k_2\beta)^2-(k_2 \alpha-k_1 \beta)^2=
-\alpha^2+ \gamma_1^2 +2 \gamma_1 \beta 
\end{equation*}
which, with the choice $\alpha=(q_0-\gamma_0)$, becomes
\begin{equation}\label{dvz}
\dv(z)= -(q_0-\gamma_0)^2+\gamma_1^2+2 \gamma_1 \beta.
\end{equation}

\noindent \quad {\bf Case 3.2 (a)} \ \  {\em There exists $\gamma \in \R$ such that $A+B\gamma =0$}
{\ }

In the special case where $\gamma_1=0$, i.e. when $\gamma=\gamma_0 \in \R$, the expression for $\dv(z)$ simplifies to 
$-(q_0-\gamma_0)^2$ 
and so 
the condition $\dv(z)=\dv(q)$ simply reads as
\begin{equation}\label{condDetgamaReal}
-(q_0-\gamma_0)^2=\dv(q).
\end{equation}
Thus,  there will be zeros in the class $\ll q \rr$ if and only if this condition is satisfied; in such a case, the zeros will form the set
\begin{equation}\label{L}
\quad {\cal L}=\big\{q_0+\beta \qi+\big(k_2\beta+k_1 (q_0-\gamma_0)\big) \qj +
\big(-k_1\beta+ k_2 (q_0-\gamma_0)\big)\qk: \beta \in \R
 \big\},
\end{equation}
with  $k_1,k_2$  given by (\ref{CsDs}). This set of points (considered as points in $\R^4$) can be seen as a line in the hyperplane $x_0=q_0$: the line through the point 
$\big(0, k_1 (q_0-\gamma_0), k_2(q_0-\gamma_0)\big)$ with the direction of the vector $(1,k_2,-k_1)$.
Note also that this is an infinite set, but  a  strict subset of $\ll q\rr$.\\[-2ex]

\noindent \quad {\bf Case 3.2 (b)} \ \ {\em There is no $\gamma \in \R$ such that $A+B\gamma =0$}
{\ }

When $\gamma_1 \ne 0$,  the expression (\ref{dvz}) for the determinant of the vector part of $z$ shows that there is a unique value of $\beta$ for which 
$\dv(z)=\dv(q)$:
\begin{equation}\label{betaP}
\beta=\frac{\dv(q)+(q_0-\gamma_0)^2-\gamma_1^2}{2 \gamma_1}.
\end{equation}
Hence, in this case, we have the following unique zero in the class $\ll q\rr $:
\begin{equation}\label{isolatedZero2}
z_0= q_0+(\beta+\gamma_1)\qi \nonumber 
  + \big(k_2\beta+k_1 (q_0-\gamma_0)\big) \qj+\big(-k_1\beta+ k_2 (q_0-\gamma_0)\big)\qk,\nonumber
\end{equation}
with $\beta$ given by (\ref{betaP}).

\vskip 0.5cm
The previous discussion --- see also  \cite{falcaoICCSA2017} and \cite{JanOpf2017} --- shows that
coquaternionic polynomials may have three different types of zeros, motivating us to introduce the following definition.

\begin{definition}
Let $z$ be a zero of a given coquaternionic polynomial $P$.
\begin{enumerate}
\item $z$ is said to be an isolated  zero of $P$, if $\ll z \rr$ contains no other zeros of $P$;
\item  $z$ is said to be an hyperboloidal zero of $P$,  if $\ll z \rr  \subseteq Z(P)$;
\item $z$ is said to be a linear zero of $P$,  if $z$ is neither an isolated zero  nor an  hyperboloidal zero of $P$.
\end{enumerate}
\end{definition} 
\begin{remark}{\rm 
As far as we are aware, the first authors to note the appearance of zeros which are neither isolated nor hyperboloidal were Janovsk\'a  and Opfer, in \cite{JanOpf2017}, giving an example of a special family of quadratic polynomials with such zeros, which they called {\em unexpected zeros}.
However, we have to point out that there are linear zeros which are not unexpected zeros in the sense of
	\cite{JanOpf2017}, as the examples in Section~4 show.
	}
\end{remark}

In what follows, we will treat all the zeros belonging to the same quasi-similarity class as forming a single zero, i.e., 
we will refer to a whole hyperboloid of zeros or a line of zeros simply as an hyperboloidal zero or a linear zero, respectively.

We may now summarize the results of our previous discussion in the following theorem.
\begin{theorem}\label{thmCharZeros}
Let $\ll q\rr$ be an admissible class of a given polynomial $P\in \HCO[x]$ and let 
$A+Bx$ be the remainder of the right division of $P(x)$ by the characteristic polynomial 
of $\ll q\rr.$ Also, denote by $Z_{\ll q\rr}$ the set of the zeros of $P$ belonging to $\ll q\rr$.
The set $Z_{\ll q\rr}$ can be completely characterized in terms of $A$ and $B$, as follows:

\noindent {\rm 1}.\ If $B$ is non-singular, then $P$ has an isolated zero in $\ll q\rr$ and 
$$
Z_{\ll q\rr}=\Big\{-\frac{\overline{B} A}{\det (B)}\Big\}.
$$
\noindent{\rm 2}.\ 
If $B=0$ and 
\begin{enumerate}
\item[{\rm (a)}] $A \ne 0$, then $Z_{\ll q\rr}=\emptyset$;\\[-2ex]
\item[{\rm (b)}]
 $A=0$, then $Z_{\ll q\rr}=\ll q\rr$, i.e. $\ll q\rr$  is an hyperboloidal zero of $P$.
 \end{enumerate}
 \vskip .12cm
\noindent{\rm 3}.\   If $B \ne 0 $ is singular and the equation $A+Bx=0$ has
\begin{enumerate}
\item[{\rm (a)}]
no solution, then  $Z_{\ll q\rr}=\emptyset$;\\[-2ex]
\item[{\rm (b)}]
a real solution $\gamma_0$, then: 
 
{\rm (i)}
if  $(q_0-\gamma_0)^2=-\dv(q)$, then $$\quad Z_{\ll q\rr}=\big\{q_0+\beta \qi+\left(k_2\beta+k_1 (q_0-\gamma_0)\right) \qj +
\left(-k_1\beta+ k_2 (q_0-\gamma_0)\right)\qk: \beta \in \R
\big\}, $$
\quad with  $k_{1}=-\big( \frac{b_0b_2+b_1b_3}{b_0^2+b_1^2}\big)$  and $k_2=\big( \frac{b_1b_2-b_0b_3}{b_0^2+b_1^2}\big),$
i.e. $Z_{\ll q\rr}$ is a linear zero of $P$;\\

{\rm (ii)}
if $(q_0-\gamma_0)^2\ne -\dv(q)$, then $Z_{\ll q\rr}=\emptyset$;\\[-2ex]

\item[{\rm (c)}]
a nonreal solution $\gamma=\gamma_0+\gamma_1 \qi$, then $P$ has 
an isolated zero in $\ll q\rr$ and 
$$
Z_{\ll q\rr}=\big\{q_0+(\beta+\gamma_1)\qi 
  + \left(k_2\beta+k_1 (q_0-\gamma_0)\right) \qj+\left(-k_1\beta+ k_2 (q_0-\gamma_0)\right)\qk\big\},
$$
with 
$
\beta=\frac{\dv(q)+(q_0-\gamma_0)^2-\gamma_1^2}{2 \gamma_1}\, .
$
\end{enumerate}

\end{theorem}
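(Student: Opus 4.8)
The plan is to reduce the entire statement to the single affine identity (\ref{PABz}). Dividing $P$ by the quadratic $\CP_{\ll q\rr}$ as in (\ref{DivChar}) and invoking Theorem~\ref{theoremPropCharPol}, which guarantees $\CP_{\ll q\rr}(z)=0$ for every $z\in\ll q\rr$, I obtain $P(z)=A+Bz$ for all $z$ in the class. Thus $Z_{\ll q\rr}$ is exactly the solution set of the coquaternionic linear equation $A+Bz=0$ of (\ref{condZero}), restricted to $\ll q\rr$, and the whole theorem becomes a case analysis of how this equation behaves according to the invertibility of $B$.

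For Case~1, with $B$ invertible, the equation has the unique solution $z_0=-\overline{B}A/\dt(B)$ of (\ref{expZ0}); the only genuine point is to verify that this \emph{algebraic} solution actually lies in $\ll q\rr$. Here I would use admissibility: since $\CP_{\ll q\rr}$ divides ${\cal C}_P$ (Theorem~\ref{thmFTCP} together with the definition of admissible class), and since evaluation is multiplicative on real-coefficient polynomials, every element of $\ll q\rr$ is a zero of ${\cal C}_P=P\overline{P}$. Expanding $P\overline{P}$ from (\ref{DivChar}) and collecting the terms of degree at most two yields a real quadratic $\dt(B)x^2+2\re(A\overline{B})x+\dt(A)$ that must vanish at any $z\in\ll q\rr$. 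Choosing such a $z$ with $\V(z)\ne 0$, which is always possible since no quasi-similarity class reduces to a real point, and substituting $z^2=2\re(z)z-\dt(z)$, I separate real and vector parts to read off $\re(z)=-\re(A\overline{B})/\dt(B)$ and $\dt(z)=\dt(A)/\dt(B)$. A direct computation then shows that $z_0$ of (\ref{expZ0}) carries exactly these values of $\re$ and $\dt$, so by the characterization (\ref{condSim2}) it is quasi-similar to $z$, i.e.\ $z_0\in\ll q\rr$.

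Case~2 is immediate: when $B=0$ the equation $A+Bz=0$ degenerates to $A=0$, independently of $z$, so the class is either entirely excluded (when $A\ne0$) or entirely contained in $Z_{\ll q\rr}$ (when $A=0$). Case~3, with $B\ne0$ singular, is the technical heart. I would rewrite $A+Bz=0$ as the real linear system $M_Bz=-A$ via the matrix (\ref{matrixLq}). Because $B$ is singular, $\dt(B)=b_0^2+b_1^2-b_2^2-b_3^2=0$ together with $B\ne0$ forces $b_0^2+b_1^2\ne0$, and an explicit minor computation shows $M_B$ has rank two; solving the homogeneous system yields the two-parameter family (\ref{CsDs})--(\ref{genSol}). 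Exploiting the relation $k_1^2+k_2^2=1$, I normalize the particular solution to the form $\gamma=(\gamma_0,\gamma_1,0,0)$, obtaining (\ref{gensol12}). Imposing the class condition $\re(z)=q_0$ fixes $\alpha=q_0-\gamma_0$ and reduces $\dv(z)$ to the expression (\ref{dvz}) in the single parameter $\beta$; the remaining condition $\dv(z)=\dv(q)$ then splits cleanly. If $\gamma_1=0$ (the solution $\gamma$ is real) the condition is independent of $\beta$, giving either the whole line (\ref{L}) when (\ref{condDetgamaReal}) holds or nothing otherwise; if $\gamma_1\ne0$ (no real solution exists) it is linear in $\beta$ with the unique root (\ref{betaP}), producing a single isolated zero.

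The main obstacle I anticipate is the membership claim $z_0\in\ll q\rr$ in Case~1: the purely algebraic root of $A+Bz=0$ carries no a priori reason to have the correct real part and determinant, and the companion-polynomial identity furnished by admissibility is the only available lever to pin these two invariants down. The rank-two computation for $M_B$ and the normalization $\gamma_2=\gamma_3=0$ in Case~3 are routine by comparison, once one records that $\dt(B)=0$ together with $B\ne0$ guarantees $b_0^2+b_1^2\ne0$.
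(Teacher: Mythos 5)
Your proposal is correct and follows essentially the same route as the paper: the theorem is proved there by exactly this reduction to $P(z)=A+Bz$ on the class, the same companion-polynomial argument pinning down $\re(z_0)$ and $\dt(z_0)$ in the non-singular case, and the same rank-two analysis of $M_Bz=-A$ with the normalized particular solution $\gamma=(\gamma_0,\gamma_1,0,0)$ and the dichotomy $\gamma_1=0$ versus $\gamma_1\ne 0$ in the singular case. The only (harmless) cosmetic difference is your justification that every element of $\ll q\rr$ annihilates ${\cal C}_P$ via multiplicativity of evaluation at real-coefficient factors, where the paper cites Theorems~\ref{FT} and~\ref{theoremPropCharPol}.
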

 
\begin{remark}{\rm 
We should observe that the discussion contained in \cite{JanOpf2017} for the case $B$ singular, $B\ne 0$ is incomplete. In fact, there are two problems associated with the results in that paper.
The first has to do with the fact that the  authors do not consider case  3\,(c) in the previous theorem, assuming implicitly that, when $B$ is singular and $B\ne 0$, roots may only appear if there exists $\gamma \in \R$ such that $A+B\gamma =0$ --- see the assumption in \cite[Theorem 3.2]{JanOpf2017} which is invoked in both Theorems 4.2 and 4.3 of \cite{JanOpf2017}  as a process for computing the roots. 
Second, even when the hypotheses of Theorem 3.2 in \cite{JanOpf2017} are verified (i.e. we are in case 3\,(b) of  Theorem~\ref{thmCharZeros}), the formula 
$$z_0=\alpha\overline{B}+\gamma, \ \  \alpha \in \R,$$
which is  proposed in \cite{JanOpf2017} for obtaining the solutions  
of equation $A+Bz=0$ may not give  all the solution of this equation; as we noted, the general solution of
$A+Bz$ is given by  $z=u+\gamma $, with $u$ any coquaternion satisfying $B u=0$,   $u$ not necessarily  of the special form $\alpha \overline{B}$. 
  
For the above reasons, 
the use of the algorithm proposed by the authors of \cite{JanOpf2017} to compute the roots of a given polynomial --- Algorithm 6.1 --- may fail to produce all the roots.
Examples~\ref{example1}--\ref{example2}  given later illustrate our observations. 
A revised version of the Algorithm 6.1 of \cite{JanOpf2017} to compute all the roots of a given polynomial  will be presented in the end of this section.
}
  \end{remark}

We now give a theorem
with a special case where we know that a  polynomial  has linear zeros.

\begin{theorem}\label{theorLinZeros}
Let $P(x)$ be a polynomial of degree $n$ whose companion polynomial has $m$ real simple zeros
$r_1, r_2, \ldots, r_{m}, \ m\le 2n,$  and let $P_{r}(x)=P(x) (x-r)$ with $r \in \R$,  $r \ne r_i; i=1,\ldots, m$.
Then, $P_r(x)$ has (at least) $m$ linear zeros.
\end{theorem}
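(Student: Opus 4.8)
The plan is to exhibit, for each simple real root $r_i$ of the companion polynomial ${\cal C}_P$, one admissible class of $P_r$ that falls precisely into Case~3\,(b)(i) of Theorem~\ref{thmCharZeros}, and hence carries a linear zero. First I would compute the companion polynomial of $P_r$. Since $x-r$ has real coefficients, Proposition~\ref{propConjugatePolynomial} gives $\overline{P_r}=(x-r)\,\overline{P}$, and because $(x-r)^2$ is a real polynomial and therefore central,
\[
{\cal C}_{P_r}=P_r\,\overline{P_r}=P(x-r)^2\overline{P}=(x-r)^2\,{\cal C}_P.
\]
As $r\ne r_i$ and each $r_i$ is a simple root of ${\cal C}_P$, the quadratic $(x-r_i)(x-r)$ divides $(x-r)^2{\cal C}_P={\cal C}_{P_r}$. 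Setting $p_{ir}=\frac{r_i+r}{2}+\frac{r_i-r}{2}\qj$, so that $\CP_{\ll p_{ir}\rr}(x)=(x-r_i)(x-r)$, this shows that each class $\ll p_{ir}\rr$ is admissible for $P_r$.

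Next I would determine the remainder $A+Bx$ of the right division of $P_r$ by $\CP_{\ll p_{ir}\rr}=(x-r_i)(x-r)$. The key simplification is that this divisor has two distinct real (hence central) roots: evaluating the identity $P_r=Q\,\CP_{\ll p_{ir}\rr}+A+Bx$ at $r_i$ and at $r$ annihilates the quotient term (because $\CP_{\ll p_{ir}\rr}$ has real coefficients, evaluation at a real point respects the product and $\CP_{\ll p_{ir}\rr}(r_i)=\CP_{\ll p_{ir}\rr}(r)=0$), yielding $A+Br_i=P_r(r_i)$ and $A+Br=P_r(r)$. Moreover, since the trailing factor $(x-r)$ also has real coefficients, $P_r(r_i)=P(r_i)(r_i-r)$ and $P_r(r)=P(r)(r-r)=0$. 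Subtracting gives $B(r_i-r)=P(r_i)(r_i-r)$, whence $B=P(r_i)$ and $A=-rB$; that is, the remainder is $A+Bx=P(r_i)(x-r)$.

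The crucial step --- and the place where the simplicity hypothesis is essential --- is to show that $B=P(r_i)$ is a nonzero \emph{singular} coquaternion. Singularity follows from $\dt(P(r_i))=\overline{P(r_i)}\,P(r_i)={\cal C}_P(r_i)=0$, as $r_i$ is a real root of the real polynomial ${\cal C}_P$. For nonvanishing, suppose $P(r_i)=0$; then by the Factor Theorem~\ref{FT} one has $P=Q(x-r_i)$, hence $\overline{P}=(x-r_i)\overline{Q}$ and ${\cal C}_P=\overline{P}\,P=(x-r_i)^2{\cal C}_Q$, so $r_i$ would be a root of ${\cal C}_P$ of multiplicity at least two, contradicting simplicity. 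Thus $B\ne 0$ is singular, placing us in Case~3 of Theorem~\ref{thmCharZeros}, and the equation $A+Bx=0$, i.e.\ $B(x-r)=0$, admits the real solution $\gamma_0=r$, so we are in Case~3\,(b).

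Finally I would verify the determinant condition separating Case~3\,(b)(i) from 3\,(b)(ii). For $\ll p_{ir}\rr$ one has $q_0=\re(p_{ir})=\frac{r_i+r}{2}$ and $\dv(p_{ir})=-\big(\frac{r_i-r}{2}\big)^2$, while $\gamma_0=r$ gives $(q_0-\gamma_0)^2=\big(\frac{r_i-r}{2}\big)^2=-\dv(p_{ir})$. Hence the equality $(q_0-\gamma_0)^2=-\dv(p_{ir})$ holds automatically, so by Theorem~\ref{thmCharZeros}, Case~3\,(b)(i), $Z_{\ll p_{ir}\rr}$ is a linear zero of $P_r$. Since distinct roots $r_i$ yield classes with distinct real parts $\frac{r_i+r}{2}$, the $m$ classes $\ll p_{ir}\rr$ are pairwise distinct, giving at least $m$ linear zeros. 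The main obstacle is the singular-but-nonzero analysis of $B=P(r_i)$ in the third paragraph; the remaining steps reduce to the interpolation computation of the remainder and a direct check against the cases of Theorem~\ref{thmCharZeros}.
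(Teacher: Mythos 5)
Your proof is correct and follows essentially the same route as the paper: identify the admissible classes with characteristic polynomial $(x-r_i)(x-r)$, show the remainder $A+Bx$ has $B$ singular and nonzero with real solution $\gamma_0=r$, and verify the condition $(q_0-\gamma_0)^2=-\dv(p_{ir})$ to land in Case~3\,(b)(i) of Theorem~\ref{thmCharZeros}. The only (harmless) difference is that you pin down the remainder explicitly as $P(r_i)(x-r)$ by interpolation and deduce singularity of $B$ from $\dt(P(r_i))={\cal C}_P(r_i)=0$, whereas the paper argues both the singularity of $B$ and $B\ne 0$ indirectly by contradiction.
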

\begin{proof}
Naturally, the  roots of the companion polynomial of $P_r$ are the previous roots of 
the companion polynomial of $P$ together with the double root $r$. So 
$r,\, r,r_1, r_2, \ldots, r_{m}$ are roots of ${\cal C}_{P_r}$.
This means that
the classes $\ll p_k\rr$, with $p_k=\frac{r+r_k}{2}+\frac{r-r_k}{2} \qj; k=1,\ldots,m,$ are admissible 
classes for $P_r$. We now show that each of these classes contains a linear zero.
The characteristic polynomial of the class $\ll p_k\rr $ is $\CP_{\ll p_k\rr}(x)=(x-r)(x-r_k)$. Dividing 
$P_r(x)$ by $\psi_{\ll p_k\rr}$ will give us
$$P_r(x)=Q(x) (x-r_k)(x-r)+Bx+A.$$
But, since $P_r(r)=0$, we obtain $Br +A=0$.
We cannot have $B$ nonsingular, since this would imply  $z_0=r \in \ll p_k\rr$, which is false, since  $\re(p_k) \ne r$.
Also, we cannot have $A=B=0$: this would mean that $P_r(x)=Q(x)(x-r) (x-r_k)$ would have the real root $r_k$, implying that 
${\cal C}_{P_r}$ would have $r_k$ as a double root, which is contrary to the hypotheses of the theorem.
 Hence, we are in the case 3\,(b) of Theorem~\ref{thmCharZeros}, with $\gamma_0=r$; since
$$(\re(p_k)-r)^2=\left(\textstyle{\frac{r+r_k}{2}}-r\right)^2=\left(\textstyle{\frac{r-r_k}{2}}\right)^2=-\dv(p_k),$$
we may conclude that $\ll p_k\rr$ contains a line of zeros.
\qquad
\end{proof}

The following observations clarify some characteristics  of the zero-sets of coquaternionic polynomials.
\begin{enumerate}
\item[O1.]
We first note that classes of the type $\ll q_0+\sqrt{\dv(q)}\,  \qi\rr,$ $\dv(q)>0$, will never contain a linear zero.
In fact, as shown in Theorem~\ref{thmCharZeros}, linear zeros are only obtained in case 3\,(b)-(i), demanding   $(q_0-\gamma_0)^2=-\dv(q)$ to be satisfied; this condition is, naturally, impossible to verify if $\dv(q)>0$. 
(The same conclusion could be taken invoking geometric arguments,  by observing that an hyperboloid of two-sheets does not contain any straight line.)
So, we conclude that  linear zeros will always be either of Type 2 or Type 3 coquaternions.
\item[O2.]
We have seen that a  polynomial $P$ of degree $n$ can attain the maximum number $n(2n-1)$ of zeros only if the roots of its companion polynomial are all real and simple. However, we must point out that  this is not a sufficient condition  for the existence of $n(2n-1)$ zeros, as  illustrated in  Example~\ref{example3}. 

\item[O3.] A coquaternion $z$ is an hyperboloidal zero of a given polynomial $P$ if and only if $\CP_{\ll z\rr}$ divides $P$. This implies that $(\CP_{\ll z\rr})^2$ has to divide the companion polynomial, meaning that the roots in $\C$ of
$\CP_{\ll z\rr}$ will  appear as roots of ${\cal C}_P$ with a multiplicity greater or equal to two if 
$z$ is of Type 1 or Type 2 or greater or equal to 4 if $z$ is of Type 3.  However, not all  multiple roots of ${\cal C}_P$ correspond to hyperboloidal zeros of $P$; see Example~\ref{example4}.

\end{enumerate}

We end this section by giving an algorithm to compute the roots of a coquaternionic polynomial which is based on the discussion 
given previously.

\begin{algorithm}[h!]
\caption{Compute the roots of a coquaternionic polynomial $P$}\label{algor}

{\sc Input:} Coefficients of $P$\\[-10pt]

{\sc Output:} Lists $\ell_I$,$\ell_H$ and $\ell_L$ with the isolated, hyperboloidal and linear roots of $P$ \\[-10pt]

\begin{algorithmic}[1]
\State Initialize lists ${\ell}_I$,  $\ell_H$  and  ${\ell}_L$ as empty lists
\State Compute  ${\cal C}_P$ -   formula (\ref{compPolynomial}) \Comment{${\cal C}_P$  companion pol. of $P$}
\State Determine roots in $\C$ of ${\cal C}_P$ \Comment{Use a numerical method, if necessary}
\State Identify admissible classes of $P$ - formulas (\ref{chPcomplex}) and (\ref{chPreal})
\For{each admissible class} 
\State Compute $A$ and $B$ - formulas (\ref{fAB})
\If {$B=0$}
 \If {$A=0$} 
 \State {Add representative of class to list ${\ell_H}$} 
 \EndIf
\Else 
\State {Compute $\dt(B)$ - formula (\ref{detq})}
\If {$\dt(B)\ne 0$} 
\State Compute $z_0$ by formula (\ref{expZ0}) and add it to list $\ell_I$
\Else 
\If{system $Bx=-A$ has a solution $\gamma$}
\If{$\gamma \in \R$}
\If {condition (\ref{condDetgamaReal}) holds}
\State Determine $k_1,k_2$ by formulas (\ref{CsDs})
\State  Add  
 $\{q_0,k_1,k_2\}$ to list $\ell_L$ \Comment{$q_0$ real part of rep. of class}
 \EndIf
\Else
\State Determine solution $z_0$ by formula (\ref{isolatedZero2})
\State Add $z_0$  to list $\ell_I$ 
\EndIf
\EndIf
\EndIf
\EndIf
\EndFor
\end{algorithmic}
\end{algorithm}

\section{Examples}\label{section4}
In this section, we present several examples of application of Algorithm~\ref{algor} which illustrate some of the results and remarks 
given in Section~\ref{section3}.

 \begin{example}\label{example1}
 {\rm
Let  $P_1(x)=x^2+(1+\qi+\qj+\qk)x+(2-\qi-\qj+\qk)$. 
The roots of the companion polynomial of $P_1$ 
are $\qi, -\qi, -1- \sqrt{2}\qi , -1+\sqrt{2} \qi$
and so we have the following two admissible 
 classes: $\ll \qi\rr , \ll  -1+\sqrt{2} \qi\rr$.

Consider the determination of the roots lying in class $\ll\qi\rr$.
Dividing  the polynomial $P_1$ by
 $\CP_{\ll\qi\rr} $ leads to
\begin{eqnarray*}
A=1-\qi-\qj +\qk \quad {\rm and} \quad  B=1+\qi+\qj+\qk.
\end{eqnarray*}
So, we  have that  $B$ is singular, but 
there is no $\gamma \in \R$ such that $A+B\gamma =0$.
%, i.e. the assumption of Theorem 3.2 in \cite{JanOpf2017} fails to be satisfied. 
However, it can be easily verified that the system $M_Bz=-A$ is consistent and so we are in case 3\,(c) of Theorem~\ref{thmCharZeros}, obtaining that $z_0=\qi$ is the only root of $P_1$ in $\ll \qi\rr$.  This shows that %the conclusion of Theorem 4.2 of \cite{JanOpf2017} is not true and also  means that 
the  root $z=\qi$ will not  be computed if we use Algorithm 6.1 in \cite{JanOpf2017}.
}
\end{example}

 \begin{example}\label{example2}
 {\rm 
We now consider the polynomial  $P_2(x)=x^2-(3+\qj)x+2+\qj$, whose corresponding companion polynomial has a triple root $1$ and a simple root $3$. Hence, $P_2$ has two admissible classes: $\ll 1\rr$ and $\ll 2+\qj\rr$. If we divide $P_2$ by the characteristic polynomial of the class
$\ll 1\rr$ we obtain 
$$A=1+\qj\quad {\rm and} \quad  B=-1-\qj.$$
Hence, there exists $\gamma=1 \in \R$  such that $A+B\gamma=0$; also, it is easily seen that the  condition (\ref{condDetgamaReal}) is verified and so we are in the presence of a linear zero. More precisely, we conclude that all the elements in the set
 $\{1+\beta\qi+\beta\qk : \beta\in \R\}$
 are zeros of $P_2$.
 
In a total analogous manner we find that $P_2$ has a linear zero in the class $\ll 2+\qj\rr$,
given by  $
\left\{2+\beta\qi +\qj -\beta\qk  : \beta \in \R\right \}
$. The application of  \cite[Algorithm~6.1]{JanOpf2017} would 
only determine two isolated zeros, namely $z_0=1$ and $z_0=2+\qj$.
}
\end{example}
\vskip .3cm

\begin{example}\label{example3New}
{\rm 
We consider now the polynomial    $P_3(x)=P(x)(x-1)$, where 
$P(x)=a_3x^3+a_2 x^2+a_1 x+a_0$ with 
$a_3=2+2\qi-\qj$, $a_2=-1-5 \qj-\qk$, $a_1=-4-5 \qi +\qj +\qk$ and $a_0=2-2\qi+2\qj+3 \qk$  which was studied in \cite[Example~7.2]{JanOpf2017}. The companion polynomial of $P$  has 6 simple real roots, as  observed in that paper; hence, by applying Theorem~\ref{theorLinZeros}, we know that $P_3$ has  6 linear roots which, however, are not found in \cite{JanOpf2017}.
As an example of a line of zeros of $P_3$, we have
$$
{\cal L}=\bigl\{\beta \qi+(-\textstyle{\frac{3}{5}}\beta-\textstyle{\frac{4}{5}}) \qj + (-\textstyle{\frac{4}{5}}\beta+\textstyle{\frac{3}{5}})\qk : \beta \in \R\big\} \subsetneqq  \ll \qj\rr,$$
as can be easily verified.
}
\end{example}

\begin{example}\label{example3}{\rm 
Let $P_4(x)=x^2+(-5-\qj)x+(\frac{11}{2}+\frac{5}{2}\qj)$. This second degree polynomial
has a companion polynomial with four simple real roots  $1,2,3,4$ and hence has 6 admissible classes, which is the maximum number allowed for a second degree polynomial. However, as we will now show, not all the admissible classes contain roots of $P_4$.
If we consider, for example, the class $\ll 2+\qj\rr$, we obtain
$$A=\textstyle{\frac{5}{2}}+\textstyle{\frac{5}{2}}\qj \quad {\rm and} \quad  B=-1-\qj.$$
Hence, $B$ is singular, $B\ne 0$ and there exists $\gamma=\frac{5}{2} \in \R$ such that $A+B\gamma =0$; in this case, we have
$(\frac{5}{2}-2)^2=\frac{1}{4} \ne 1=-\dv(2+\qj)$, and so 
we are in case 3\,(b)-(ii) of Theorem~\ref{thmCharZeros}, leading us to conclude that there are no roots in this class. In the same manner, we show that the class $\ll 3+\qj\rr$ does not contain any root of $P_4$. This illustrates our observation O2 made in Section~\ref{section3}.
}
\end{example}

\begin{example}\label{example4}
{\rm We now consider three very simple polynomials that illustrate very clearly the observation O3 made in Section~\ref{section3}:
\begin{eqnarray*}
P_5(x)&=& x^2-2x+1,\\[.1cm]
Q_5(x)&=&x^2-(2+\qi+\qj)x+(1+\qi+\qj),\\[.1cm]
R_5(x)&=&
x^2 +(-2-6\qi-5\qj-3\qk)x+(3\qi+2\qj+2\qk).
\end{eqnarray*}
All these polynomials  have $(x-1)^4$ as companion polynomial (i.e., the companion polynomial has the real root 1 with multiplicity 4). However, 
in what concerns the zeros in the (unique) class $\ll 1\rr$, they behave differently:   the polynomial $P_5$ has 
$\ll 1\rr$ as an hyperboloidal zero, the polynomial 
$Q_5$ has the linear zero $\{1+\beta \qi+\beta \qj : \beta \in \R\}$ and the polynomial $R_5$ has the isolated zero 
$z_0=1+5\qi+4\qj+3\qk$. 

Note also that, in the case of $R_5$, the real multiple  root $1$ of the companion polynomial
is not a real root of $R_5$, contradicting the last assertion in Theorem~4.3 of \cite{JanOpf2017}.
}
\end{example}

Our last example addresses the problem posed Janovsk\'a and Opfer in \cite{JanOpf2017}:
``{\textit{Given $n>4$, can we find a coquaternionic polynomial of degree $n$ with the maximal number ${2n \choose 2}$ of zeros?}}".

\begin{example}\label{example5}{\rm 
Let  $P_6(x)=x^5+c_4x^4+c_3 x^3+c_2x^2+c_1x+c_0$ with
$$
\begin{array}{ll}
 c_4=\frac{1}{2}+\qi+7\qj+\frac{13}{2}\qk, \ c_3=24+\frac{13}{2}\qi+\frac{11}{2}\qj-6\qk,\ c_2=-\frac{47}{2}-32\qi-18\qj+\frac{33}{2}\qk, \\[.1cm]
c_1=-51+\frac{81}{2}\qi-\frac{57}{2}\qj-52\qk, \  c_0=-9-12\qi-18\qj+9\qk.
\end{array}
$$
The  companion polynomial of this fifth degree polynomial has 10 real simple roots, $-6,-3,-1,\, 1,\, 2,5, 1\pm\sqrt{2},\frac{1}{2}(-1\pm\sqrt{5})$, and 
it can be verified that each of the 45 admissible classes contains an isolated root of $P_6$.
 For example, in the class $\ll 1+\sqrt{2}\qj\rr$ we have the root
$z_0=1+\frac{3}{8} \qi -\frac{11}{8} \qj -\frac{1}{2} \qk $ and in the class $\ll \qj\rr$ the isolated root $z_1=\frac{17}{84}\qi-\frac{27}{28}\qj-\frac{1}{3}\qk .$ 
Examples of polynomials of degrees 6,\, 7,\, 8 and 9 achieving the maximum possible number of roots --- 66, 91, 120 and 153, respectively ---  
were also constructed; all these polynomials were obtained as products of appropriately chosen linear factors and  we believe that this type of process can be used to compute examples of polynomials of any prescribed degree with the maximum number of roots. }
\end{example}

\subsection*{Acknowledgments}
Research at CMAT was financed by Portuguese Funds through FCT - Funda\c c\~ao para a Ci\^encia e a Tecnologia, within
the Project 
UID/MAT/00013/2013. 
Research at NIPE was carried out within the funding with COMPETE reference number POCI-01-0145-FEDER-006683 (UID/ECO/03182/2013), with the FCT/MEC's (Funda\c c\~ao para a Ci\^encia e a Tecnologia, I.P.) financial support through national funding and by the ERDF through the Operational Programme on ``Competitiveness and Internationalization - COMPETE 2020" under the PT2020 Partnership Agreement.

\bibliographystyle{siam}

\end{document}